\newcommand{\required}[1]{\section*{\hfil \sharp1\hfil}}
\newcommand{\Beq}{\begin{equation}}
\newcommand{\Eeq}{\end{equation}}
\newcommand{\beq}{\begin{equation*}}
\newcommand{\eeq}{\end{equation*}}
\newcommand{\bal}{\begin{align}}
\newcommand{\eal}{\end{align}}
\newtheorem{theorem}{Theorem}
\newtheorem{Lemma}{Lemma}
\newtheorem{definition}{Definition}
\theoremstyle{definition}
\newtheorem{remark}{Remark}
\newcommand{\ac}[1]{\begin{quotation}\textbf{Anuj's comment:\
		}{\textit{\sharp1}}\end{quotation}}
\newcommand{\rc}[1]{\begin{quotation}\textbf{Rohit's comment:\
		}{\textit{\sharp1}}\end{quotation}}
\title{Minimax optimal estimator in the stochastic inverse problem for exponential Radon transform}
\date{}
\author{Anuj Abhishek}
\begin{document}
	\maketitle
	\begin{abstract}
	In this article, we consider the problem of inverting the exponential Radon transform of a function in the presence of noise. We propose a kernel estimator to estimate the true function, analogous to the one proposed by Korostel\"{e}v and Tsybakov in their article `Optimal rates of convergence of estimators in a probabilistic setup of tomography problem', Problems of Information Transmission, 27:73-81,1991. For the estimator proposed in this article, we then show that it converges to the true function at a minimax optimal rate.
	\end{abstract}
	\section{Introduction}
	Exponential Radon transform (ERT), which is the object of study in this article, can be thought of as a generalization of the classical Radon transform. In fact, the ERT of a compactly supported function $f(x)$ in $\mathbb{R}^2$ is given by :
	\begin{align}
	T_{\mu}f(\theta,s)=\int\limits_{x\cdot\theta=s}e^{\mu x\cdot\theta^{\perp}}f(x)dx
	\end{align}
	Here $s\in \mathbb{R}$, $\theta\in \mathrm{S}^1$ where $\mathrm{S}^1$ is the unit circle in $\mathbb{R}^2$, $\mu$ is a constant and $\theta^{\perp}$ denotes a unit vector perpendicular to $\theta$. Recall that lines in $\mathbb{R}^2$ can be parameterized as $L(\theta,s)=\{x:x\cdot\theta=s\}$. Thus, just as the classical Radon transform, ERT takes a function defined on a plane and maps it to a function defined over the set of lines parameterized by $(\theta,s)$. Such transforms arise naturally in imaging modalities such as SPECT (single photon emission computed tomography) imaging \cite{Wen_2006} and nuclear magnetic resonance imaging \cite{Louis1982}. 
	\par \noindent The exponential Radon transform is a special case of a more general transform called attenuated Radon transform which takes the integral of a function over straight lines with respect to an exponential weight that signifies a non-constant attenuation effect. We refer the readers to the article by Finch \cite{finch_io} and the textbook by Natterer and W\"{u}bbeling \cite{natt_wubb_book} for an excellent overview of the attenuated Radon transform. Indeed, the attenuated Radon transform is itself an example of a generalized Radon transform that was studied by Quinto in \cite{Quinto_80, Quinto_83}. 
	\par \noindent Inversion methods for the exponential Radon transform were derived by Natterer in \cite{Natt_79} and by Tretiak and Metz in \cite{TM_80}. Hazou and Solmon in \cite{hazou_solmon} gave filtered backprojection (FBP) type formulas for inversion of ERT using a class of filters. Such FBP type inversion formulas are based on the method of approximate inverse which were developed systematically in the articles by Louis \cite{louis_96} and Louis and Maas \cite{louis_maas}. An exhaustive treatment of the method of approximate inverse can be found in the book by Schuster \cite{Schuster_book}. Rigaud and Lakhal have used the method of approximate inverse and derived Sobolev estimates for attenuated Radon transform in \cite{Rigaud2015}, these estimates were central to proving some of the theorems in this article. Furthermore, Novikov in \cite{Novikov2002} and Natterer in \cite{Natt01} give an inversion formula for the more general attenuated Radon transform. There is extensive literature available on this subject and we give now a partial list of references where an interested reader may find important insights and advances made in the study of exponential and attenuated Radon transforms, see e.g. \cite{kuch96,Bal04,Boman04,novikov02,Monard18,Monard16,rullg04,salo_11,Shne94,Shne2}.
	\par \noindent Classical Radon transform has also been extensively studied in the stochastic setting. A detailed discussion of positron emission tomography (PET) in presence of noise can be found in the seminal article by Johnstone and Silverman \cite{JS_90}. {In \cite{Hahn_quinto}, Hahn and Quinto establish upper and lower bounds for the convergence of two probability measures in terms of the rates of convergence of their Radon transforms}. Korostel\"{e}v and Tsybakov show that optimal minimax convergence rates are attained by kernel type estimators, which are closely linked to FBP inversion methods, in \cite{Tsybakov_1991,Tsybakov_92}. An exhaustive coverage of the non-parametric estimation methods that are used to establish the optimal convergence rates in this article and elsewhere can be founds in the books written by Korostel\"{e}v and Tsybakov \cite{minimax_book} and Tsybakov \cite{Tsybakov_book}. Cavalier obtained results on efficient estimation of density in the non-parametric setting for stochastic PET problem in \cite{Cavalier_98,Cavalier_00}. In addition to the non-parametric kernel type estimators, Bayesian estimators for the stochastic problem of X-ray tomography have been studied by several authors, most notably by Lassas, Siltanen and Somersalo, see e.g. \cite{Siltanen2003,Lassas_09} and references therein. More recently, Monard, Nickl and Paternain have obtained results on efficient Bayesian inference for the attenuated X-ray transform on a Riemannian manifold, see \cite{Monard_19}.
	\par \noindent In this article, we propose a statistical kernel estimator for the ERT problem and show that it attains the optimal minimax rate of convergence. The organization of the article is as follows: in section 2, we describe the mathematical set up of the stochastic problem for ERT and recall some standard definitions from the literature. In section 3, we recall the FBP type inversion in the deterministic (noise-less) setting. In section 4, we propose a kernel type estimator and establish that it is asymptotically unbiased. Finally, in section 5 we show that this estimator attains optimal minimax rates of convergence.
	
	\section{Mathematical set-up and definitions}\label{definitions}
	In this section we will describe the mathematical framework for the problem and recall some standard definitions from the literature that will help us assess the optimality of the estimator proposed in this article. 
	\par \noindent Let $f(x):\mathbb{R}^2\to \mathbb{R}$ be a function that satisfies the following assumptions:\\
	\textbf{Assumption 1 (A1): } Let $B_1(x)=\{x:\vert\vert x\vert\vert \leq 1\}$ be the unit ball in $\mathbb{R}^2$. We assume that $f(x)$ is supported in the unit ball $B_1(x)$.\\
	\textbf{Assumption 2 (A2): }Let $\widetilde{f}(\xi)$ represent the Fourier transform of $f(x)$, i.e. $\widetilde{f}(\xi)=\int_{\mathbb{R}^2} f(x)e^{-i\xi\cdot x}dx$. We assume that the Fourier transform of $f(x)$ satisfies the following inequality,
	{$$\int_{\mathbb{R}^2}(1+\vert\vert \xi \vert\vert^2)^\beta {\vert \widetilde{f}(\xi)\vert^2} d\xi\leq L$$} for some fixed positive numbers $L$ and $\beta>1$.\\
	We will denote by $H(\beta,L)$, the class of functions satisfying assumptions A1 and A2. 
	\begin{definition}
	Let $\mathrm{S}^1$ denote the unit circle in $\mathbb{R}^2$ and $Z= \mathrm{S}^1\times [-1,1]$ be the cylinder whose points are given by $(\theta,s)$ where $s\in [-1,1]$ and $\theta \in \mathrm{S}^1$. By $\theta^{\perp}$, we will denote a unit vector perpendicular to $\theta$. The exponential Radon transform of $f\in H(\beta,L)$ is defined as the following function on $Z$:
	$$T_{\mu}f(\theta,s)=\int_{x\cdot\theta=s}e^{{\mu}x\cdot\theta^{\perp}}f(x)dx$$ where $\mu$ is a fixed constant. It is clear that if $\mu=0$, then the exponential Radon transform reduces to the case of the classical Radon transform.
	
	\end{definition}
\begin{definition}
	Associated to the exponential Radon transform, is its dual transform $$T_\mu^{\sharp}g(x)=\int_{\mathrm{S}^1}e^{\mu x\cdot\theta^\perp}g(\theta,x\cdot \theta)d\theta.$$ Clearly, for $\mu=0$, this is the backprojection operator for the classical Radon transform.
	
\end{definition}
\noindent Now we will describe the stochastic problem of exponential Radon transform. Let $\{(\theta_i,s_i)\}_{i=1}^{i=n}$ be $n$ random points on the observation space $Z$ and let the observations be of the form:
\begin{equation} \label{obs_model}
Y_i=T_{\mu}f(\theta_i,s_i)+\epsilon_i
\end{equation} 
\par \noindent We assume that the points $(\theta_i,s_i)$ are independent and identically distributed (i.i.d.) on $Z$ and $\epsilon_i$ are i.i.d. random variables with zero mean and some finite positive variance $\sigma^2$. The collection of the random points $\{(\theta_i,s_i)\}_{i=1}^{i=n}$ where observations are made is  called the design and will be denoted by $\mathcal{D}_n$. In the observation model given by equation (\ref {obs_model}), the random variables $\epsilon_i$ account for noise. The stochastic inverse problem for exponential Radon transform is to then estimate the function $f(x)$ based on the observations $Y_i$ for $i=\{1,2,\dots,n\}$. This problem is non-parametric in the sense that the function $f$ itself is not assumed to be of any parametric form but is rather assumed to belong to a general class of functions, say $\mathcal{F}$. In this article we have assumed $f\in H(\beta,L)$. Suppose one devises an estimator $\hat{f}_n(x)$ based on the observed data. One is then naturally led to ask the question, if this estimator is optimal? The most popular of such approaches to assess the optimality of estimators in a non-parametric setting is the minimax approach, which we will describe below. Let the nonparametric class of functions $\mathcal{F}$ be equipped with a semi-norm $d$. Thus the semi-distance between two elements $f\in \mathcal{F}$ and $g\in\mathcal{F}$ will be represented as $d(f,g)$ and we will use the quantity $d^2(\hat{f},f)=(d(\hat{f},f))^2$ as a measure of error between an estimator $\hat{f}$ and the true function $f$. First of all, note that as any such estimator $\hat{f}_n(x)$ will depend on the random observation points $\{(s_i,\theta_i)\}_{i=1}^{i=n}$ and  observations $\{Y_i\}_{i=1}^{i=n}$, it is better to consider the expected value of the error between the estimator and the true function (under the chosen semi-norm) as a measure of accuracy. The following definitions are standard in the literature.
\begin{definition}[\cite {JS_90,Tsybakov_book}]\label{risk_fn}
	The risk function of an estimator $\hat{f}_n(x)$ is defined as:
	$$ \mathcal{R}(\hat{f}_n,f)=E_f(d^2(\hat{f}_n,f)).$$
\end{definition}
\par 
\noindent From here on, $E_f$ will be used to denote the expectation with respect to the joint distribution of random variables $(s_i,\theta_i,Y_i)$, $i=\{1,\dots,n\}$ satisfying the model given by (\ref{obs_model}). Ideally, one would like to devise an estimator that would minimize the risk function. However, as the definition of the risk function depends on $f$ as well, one tries instead to find an overall measure of risk such as the \textit{minimax risk}.
\begin{definition}\cite[Page 78]{Tsybakov_book}\label{minimax_risk}
	Let $f(x)$  belong to some non-parametric class of functions $\mathcal{F}$. The maximum risk of an estimator $\hat{f}_n$ is defined as:
	$$r(\hat{f}_n)=\sup_{f\in\mathcal{F}} \mathcal{R}(\hat{f}_n,f). $$ Finally, the minimax risk on $\mathcal{F}$ is defined as:
	$$r_n(\mathcal{F})=\inf_{\hat{f}_n} \sup_{f\in\mathcal{F}}\mathcal{R}(\hat{f}_n,f)$$ where the infimum is taken over the set of all possible estimators $\hat{f}_n$ of $f$. Clearly, $$r_n(\mathcal{F})\leq r(\hat{f}_n).$$
\end{definition}
\begin{definition} \label{Def5} \cite[Page 78]{Tsybakov_book}\label{optimality} Let $\{\Psi_n^2\}_{n=1}^{\infty}$ be a positive sequence converging to zero. An estimator $\hat{f}_n^{*}$ is said to be minimax optimal if there exist finite positive constants $C_1$ and $C_2$ such that,
	$$ C_1\Psi_n^2\leq r_n(\mathcal{F})\leq r(\hat{f}_n^{*})\leq C_2\Psi_n^2. $$
	Furthermore, $\Psi_n^2$ is said to be the optimal rate of converegence.

\end{definition}
\par \noindent In this article, whenever we refer to the optimality of an estimator, we will mean its minimax optimality.
In section 4, we will propose an estimator for $f(x)\in H(\beta,L)$ based on the model (\ref{obs_model}) and establish its optimality in the following (semi) norms: 
\begin{enumerate}
	\item $d_1(f,g)=\vert f(x_0)-g(x_0)\vert$ \quad \quad ($x_0$ is an arbitrary fixed point in $B_1(x)$)
	\item $d_2(f,g)=\displaystyle{(\int \vert f(x)-g(x)\vert^2 dx)^{1/2}}$
	
\end{enumerate} as per definition \ref{Def5} above. We also note that the risk function defined using semi-norm $d_1$ is called the mean squared error (MSE), while the risk function defined using $d_2$ is referred to in the literature as the mean integrated squared error (MISE) of the estimator. Thus:
$$\text{MSE}(\hat{f}_n,f)=E_f(d_1^2(\hat{f}_n,f)), \quad \quad \text{MISE}(\hat{f}_n)=E_f(d_2^2(\hat{f}_n,f)) .$$
Finally, we recall the Kullback distance between two probability measures on a measurable space: 
\begin{definition} \cite[Page 84]{Tsybakov_book}
Let $P$ and $Q$ be two probability measures on some measurable space $(\mathcal{X},\mathcal{A})$. The Kullback distance between the two measures is given by,
\begin{align*}
I(P,Q)&=\int \log \frac{dP}{dQ} dP \quad \quad \text{if } P\text{ is absolutely continuous with respect to }Q\\
&=\infty \quad \quad \text{otherwise}
\end{align*}
\end{definition}

\section{An FBP reconstruction in the deterministic setting}
In this section we will describe some of the results from the deterministic set-up, i.e. when the observations as per the model given by (\ref{obs_model}) are not corrupted by noise. 
Let $\rho>0$ such that $0<\vert\mu \vert< 1/\rho$. Consider the function $K_{\rho}(\theta,s)=K_{\rho}(s)$ defined as:
\begin{equation}\label{kernel_def}
\begin{aligned}
K_{\rho}(s)&=\frac{1}{\pi}\int_{\vert\mu\vert}^{\sqrt{(1/\rho^2)+\mu^2}}r \cos(sr)dr\\
%&=\frac{1}{\pi} \big[\int_{0}^{1/\delta} r\cos(sr) dr - \int_{0}^{\vert\mu\vert} r\cos(sr) dr\big]
\end{aligned}
\end{equation}
 These kind of functions have been used in the context of filtered backprojection formulas for Radon transforms, see e.g. \cite[Page 237]{minimax_book}, \cite [Page 109]{Natterer_textbook}.
Let $I_{p}(t)$ denote the indicator function:
\begin{align*}
I_p(t)&=1, \quad \quad \vert t\vert < 1/ p\\
 &= 0, \quad \quad \vert t \vert \geq 1/p
\end{align*}The one dimensional Fourier transform of $K_{\rho}(\theta,s)$ (in the $s$-variable) is:
\begin{align} \label{Filter}
\widetilde{K}_{\rho}(\theta,t)
&=\vert t \vert, \quad \quad \vert\mu\vert <\vert t \vert <\sqrt{(1/\rho^2)+\mu^2}\nonumber\\
&= 0, \quad \quad \text{otherwise}.
\end{align}
\noindent In the following analysis, $\star$ will represent the operation of convolution of functions. Furthermore, whenever the convolution of two functions $f$ and $g$ defined on the cylinder $Z= \mathrm{S}^1\times \mathbb{R}$  is considered, the convolution will be understood to be taken with respect to their second variable, i.e. $$f\star g(\theta,s)=\int_{\mathbb{R}}f(\theta,s-t)g(\theta,t)dt.$$  
\begin{theorem}\cite[Page 49]{Natterer_textbook} \label{fbp_formula}
Let $f_{\rho}(x)=\frac{1}{4\pi}T^{\sharp}_{-\mu}(K_{\rho}\star T_\mu f)$. Then,	$$f(x)=\lim_{\rho\to 0}f_{\rho}(x).$$
\end{theorem}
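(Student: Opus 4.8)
The plan is to reduce the filtered backprojection formula to the ordinary two-dimensional Fourier inversion formula for $f$ by means of a Fourier slice theorem adapted to $T_{\mu}$. First I would establish the complex slice identity: parameterizing the line $x\cdot\theta=s$ by $x=s\theta+u\theta^{\perp}$ and taking the one-dimensional Fourier transform of $T_{\mu}f(\theta,s)$ in the variable $s$, a direct computation gives $\widetilde{(T_{\mu}f)}(\theta,t)=\widetilde{f}(t\theta+i\mu\theta^{\perp})$. This is legitimate because assumption A1 (compact support) makes $\widetilde{f}$ the restriction of an entire function on $\mathbb{C}^2$ by the Paley--Wiener theorem, so evaluation at the complex argument $t\theta+i\mu\theta^{\perp}$ is meaningful.

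Second, I would combine this identity with the explicit filter (\ref{Filter}). Writing the convolution $K_{\rho}\star T_{\mu}f$ on the Fourier side and then applying the dual $T^{\sharp}_{-\mu}$ --- whose defining weight $e^{-\mu x\cdot\theta^{\perp}}$ is exactly conjugate to the factor produced by the slice theorem --- I would obtain the representation
\[ f_{\rho}(x)=\frac{1}{8\pi^2}\int_{\mathrm{S}^1}\int_{|\mu|<|t|<R}|t|\,e^{ix\cdot(t\theta+i\mu\theta^{\perp})}\,\widetilde{f}(t\theta+i\mu\theta^{\perp})\,dt\,d\theta, \]
where $R=\sqrt{(1/\rho^2)+\mu^2}$. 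Equivalently, an application of Fubini's theorem yields the spatial form $f_{\rho}=\frac{1}{4\pi}E_{\rho}\ast f$ (two-dimensional convolution) with the explicit kernel $E_{\rho}(z)=\int_{\mathrm{S}^1}e^{-\mu z\cdot\theta^{\perp}}K_{\rho}(z\cdot\theta)\,d\theta$. I would keep both forms available, using the convolution form to guarantee absolute convergence of all integrals via the compact support of $f$.

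Third, and this is the heart of the argument, I would show that as $\rho\to0$ (so $R\to\infty$) the representation above converges to the genuine inversion $f(x)=\frac{1}{8\pi^2}\int_{\mathrm{S}^1}\int_{\mathbb{R}}|t|\,e^{ix\cdot t\theta}\,\widetilde{f}(t\theta)\,dt\,d\theta$. The outer cutoff at $R$ disappears by dominated convergence, so the real content is to remove the imaginary shift $i\mu\theta^{\perp}$ and to account for the lower threshold $|t|>|\mu|$. Here I would exploit the holomorphy of $\widetilde{f}$: the complexified frequency $t\theta+i\mu\theta^{\perp}$ has squared length $t^2-\mu^2$, which is nonnegative precisely on the filter's support $|t|>|\mu|$, and I would deform the contour back to the real frequency plane by Cauchy's theorem, with Paley--Wiener bounds controlling the horizontal tails. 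I expect the boundary contributions generated at the threshold $|t|=|\mu|$ to be exactly what supplies the low-frequency disk $\{\|\xi\|<|\mu|\}$ that the filter omits, so that the two integrals agree in the limit.

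I expect this contour-deformation and threshold bookkeeping to be the main obstacle, together with justifying passage to the limit under the integral sign; both are cleanest when carried out on the full two-dimensional integral rather than slice by slice. For the domination I would use assumption A2: the bound $\int(1+\|\xi\|^2)^{\beta}\,|\widetilde{f}(\xi)|^2\,d\xi\le L$ with $\beta>1$ makes $(1+\|\xi\|^2)^{-\beta}$ integrable in two dimensions, whence by Cauchy--Schwarz $\widetilde{f}\in L^1(\mathbb{R}^2)$, the limiting inversion integral converges absolutely, and the same weight dominates the contour-shifted integrand uniformly in $R$. Finally, since $\beta>1$ exceeds half the dimension, $H(\beta,L)\hookrightarrow C^0$, so the convergence holds pointwise (indeed uniformly), giving $f(x)=\lim_{\rho\to0}f_{\rho}(x)$ for every $x$, as claimed.
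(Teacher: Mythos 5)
Your overall architecture is sound and parallels the paper's: both reduce the claim to showing that $f_{\rho}$ is $f$ mollified by the backprojected filter $\frac{1}{4\pi}T^{\sharp}_{-\mu}K_{\rho}$ (your $E_{\rho}$ is exactly $T^{\sharp}_{-\mu}K_{\rho}$, and your first two steps amount to the intertwining identity $T^{\sharp}_{-\mu}(K_{\rho}\star T_{\mu}f)=(T^{\sharp}_{-\mu}K_{\rho})\star f$ that the paper quotes from Natterer). The gap is in your third step, which you yourself flag as the heart of the argument but leave at the level of ``I expect'': the mechanism you anticipate --- boundary contributions at the threshold $\vert t\vert=\vert\mu\vert$ conjuring up the low-frequency disk $\{\Vert\xi\Vert<\vert\mu\vert\}$ that the filter appears to omit --- is not what happens, and a proof organized around hunting for such boundary terms would stall, because there are none and nothing is omitted.

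The correct resolution is an exact identity, not an asymptotic one. The angular integral $\int_{\mathrm{S}^1}e^{-\mu x\cdot\theta^{\perp}+i(x\cdot\theta)t}\,d\theta=2\pi J_0\bigl(\vert x\vert(t^2-\mu^2)^{1/2}\bigr)$ (Natterer VII.3.17; this is the rigorous, classical form of your observation that the complexified frequency $t\theta+i\mu\theta^{\perp}$ has squared length $t^2-\mu^2$) shows that after integrating over $\theta$ the kernel depends on $t$ only through $\sigma=(t^2-\mu^2)^{1/2}$. Under this substitution $\vert t\vert\,dt=\sigma\,d\sigma$ and the filter's support $\vert\mu\vert<\vert t\vert<\sqrt{(1/\rho^2)+\mu^2}$ maps \emph{bijectively} onto $0<\sigma<1/\rho$: the lower threshold lands at $\sigma=0$, so the full real frequency band is covered and one gets $T^{\sharp}_{-\mu}K_{\rho}=4\pi\delta^{1/\rho}$ exactly, for every $\rho>0$. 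The apparent gap $\vert t\vert<\vert\mu\vert$ is an artifact of the slice coordinate, not a missing piece of spectrum. Once this is in hand, the only limiting argument needed is the standard one for the band-limited approximate identity $\delta^{1/\rho}\star f\to f$, for which your use of A2 and Cauchy--Schwarz to get $\widetilde{f}\in L^1(\mathbb{R}^2)$ is exactly right (and is essentially the bias estimate the paper redoes in Section 5). If you replace the contour-deformation heuristic with this exact computation, your proof becomes the paper's.
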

\begin{proof}
The proof of this theorem is well known, see e.g.\cite [Section II.6] {Natterer_textbook}. However, we will reproduce it here for the sake of completeness. First of all recall that from \cite[(6.2), Page 47]{Natterer_textbook}, we know that: 
$T^{\sharp}_{-\mu}(g\star T_\mu f)=(T^{\sharp}_{-\mu}g)\star f$. Thus, if we can show that $\frac{1}{4\pi}T^{\sharp}_{-\mu}K_{\rho}$ is an approximate Dirac-delta function, then we are done. Let us then compute:
\begin{align*}\label{eqn1}
T^{\sharp}_{-\mu}K_{\rho}(x)&=\int_{\mathrm{S}^1}e^{-\mu x\cdot \theta^{\perp}}K_{\rho}(\theta,x\cdot\theta)d\theta \nonumber \\
&=\frac{1}{2\pi}\int_{\mathrm{S}^1}e^{-\mu x\cdot \theta^{\perp}}\int_{\mathbb{R}}e^{ix\cdot \theta} \widetilde{K}_{\rho}(\theta,t)dt d\theta \nonumber \\
&=\frac{1}{2\pi}\int\limits_{\vert\mu\vert<\vert t\vert <\sqrt{(1/\rho^2)+\mu^2}}\vert t \vert \int_{\mathrm{S}^1}e^{-\mu x\cdot \theta^{\perp}+i(x\cdot \theta)t}d\theta dt
\end{align*}
In what follows, by $J_0$ we will denote the Bessel function of first kind of integer order $0$. Now from \cite[VII.3.17]{Natterer_textbook} $\int_{\mathrm{S}^1}e^{-\mu x\cdot \theta^{\perp}+i(x\cdot \theta)t}d\theta= 2\pi J_0(\vert x\vert(t^2-\mu^2)^{1/2})$. Thus,
\begin{align*}
T^{\sharp}_{-\mu}K_{\rho}(x)&=\int\limits_{\vert\mu\vert<\vert t\vert <\sqrt{(1/\rho^2)+\mu^2}}\vert t \vert J_0(\vert x\vert(t^2-\mu^2)^{1/2})dt\\
&=2\int\limits_{0}^{1/\rho}\sigma J_0(\vert x\vert \sigma)d\sigma \quad \quad (\sigma=(t^2-\mu^2)^{1/2})\\
&=4\pi \big(\frac{1}{2\pi}\int\limits_{0}^{1/\rho}\sigma J_0(\vert x\vert \sigma)d\sigma\big)\\
&=4\pi \mathbf{\delta}^{1/\rho}(x) \quad \quad \cite[ \text{(1.3), Page 183}]{Natterer_textbook}
\end{align*}
where $$\displaystyle{\delta^{1/\rho}(x)=\frac{1}{2\pi}\int\limits_{\vert t\vert <1/\rho}e^{ix\cdot t}dt}=\frac{1}{2\pi}\int_{\mathbb{R}}I_{\rho}(t)e^{ix\cdot t}dt.$$ is an approximate Dirac-delta function that converges to Dirac distribution $\delta(x)$ pointwise (in the space of tempered distributions) as $\rho\to 0$. This completes the proof.
\end{proof}
\section{An {asymptotically unbiased} estimator for class H($\beta$,L)} \label{estimator}
In this section we propose a statistical estimator for $f\in H(\beta,L)$ based on the model (\ref{obs_model}) in the stochastic problem of exponential Radon transform. Inspired by the estimator proposed in \cite{Tsybakov_1991} and in Theorem \ref{fbp_formula} above, let us consider the statistical estimator:
\begin{align}
f_n^*(x)=\frac{1}{n}\sum_{i=1}^n e^{-\mu x\cdot \theta_i^{\perp}}K_{\rho_n}(\langle x\cdot \theta_i \rangle -s_i)Y_i
\end{align}

\noindent where $\theta_i, s_i$ and $Y_i$ are i.i.d. random variables as per the model (\ref{obs_model}) and $\rho_n \to 0$ as $n\to \infty$. We will call $\rho_n$ as the bandwidth of the estimator. Note that the MSE of the estimator in the non-parametric setting can be broken down in to two terms a ``bias term" and a ``variance term":
\begin{align}
\text{MSE}(f_n^*,f)&=E_f[(f_n^*(x)-f(x))^2]\nonumber \\
&=(E_f(f_n^*(x))-f(x))^2+E_f[(f_n^*(x)-E_f(f_n^*(x)))^2]\nonumber \\ &=B_n^2(x)+V_n^2(x).
\end{align}
where $B_n(x)$ is the bias of the estimator and $V_n^2(x)$ is its variance. 
Note that $$\text{MISE}(f_n^*,f)=\vert \vert B_n(x)\vert\vert^2_2+\vert \vert V_n(x)\vert\vert^2_2$$ where $\vert\vert (\cdot)\vert\vert_2$ denotes $L^2$ norm.
Recall that an estimator is said to be {asymptotically unbiased} if its bias goes to zero pointwise as the number of observations (samples) $n$ grows. We will now show that the estimator proposed above is asymptotically unbiased.
\begin{theorem}
	Let $(\theta_i,s_i)$, $i=\{1,\dots,n\}$ be i.i.d. random variables uniformly distributed on $Z=\mathrm{S}^1\times [-1,1]$ and these points be independent of the errors $(\epsilon_1,\dotsm\epsilon_n)$. If we consider the kernel estimator $f_n^*(x)=\frac{1}{ n}\sum_{i=1}^n e^{-\mu x\cdot \theta_i^{\perp}}K_{\rho_n}(\langle x\cdot \theta_i \rangle -s_i)Y_i$, then for each $ x \in B_1(x)$ the bias term, $B_n(x)=(E_f(f_n^*(x))-f(x))$, for this estimator  goes to zero as $n \to \infty$.
\end{theorem}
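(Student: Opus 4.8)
The plan is to show that the expected value of the estimator coincides \emph{exactly} with the deterministic FBP reconstruction $f_{\rho_n}(x)$ appearing in Theorem \ref{fbp_formula}, and then to invoke the convergence $f_{\rho_n}(x)\to f(x)$ already proved there. First I would exploit the i.i.d. structure: since the $n$ summands are identically distributed, linearity of expectation collapses the average to a single term,
$$E_f(f_n^*(x)) = E_f\!\left[e^{-\mu x\cdot\theta_1^{\perp}}\,K_{\rho_n}(x\cdot\theta_1 - s_1)\,Y_1\right].$$
Substituting $Y_1 = T_{\mu}f(\theta_1,s_1)+\epsilon_1$ and using that $\epsilon_1$ has zero mean and is independent of $(\theta_1,s_1)$, the factor multiplying $\epsilon_1$ separates and its expectation vanishes, leaving
$$E_f(f_n^*(x)) = E_{(\theta,s)}\!\left[e^{-\mu x\cdot\theta^{\perp}}\,K_{\rho_n}(x\cdot\theta - s)\,T_{\mu}f(\theta,s)\right].$$

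Next I would express this expectation as an integral against the uniform density on $Z=\mathrm{S}^1\times[-1,1]$, which equals $\tfrac{1}{4\pi}$ since the total mass is $2\pi\cdot 2$. The crucial observation is that Assumption A1 forces $T_{\mu}f(\theta,s)=0$ whenever $|s|>1$: the line $\{x:x\cdot\theta=s\}$ cannot meet the support $B_1$ of $f$ in that range. This allows me to extend the inner $s$-integral from $[-1,1]$ to all of $\mathbb{R}$ without changing its value, so that it becomes precisely the convolution $(K_{\rho_n}\star T_{\mu}f)(\theta,x\cdot\theta)$ as defined in Section 3. Integrating the surviving factor $e^{-\mu x\cdot\theta^{\perp}}$ over $\mathrm{S}^1$ then reproduces the dual transform, yielding
$$E_f(f_n^*(x)) = \frac{1}{4\pi}\,T^{\sharp}_{-\mu}\!\left(K_{\rho_n}\star T_{\mu}f\right)(x) = f_{\rho_n}(x).$$
It is worth noting that the normalization $\tfrac{1}{4\pi}$ coming from the uniform design measure matches exactly the constant in the FBP formula, which is what makes the identification clean.

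Finally, since $\rho_n\to 0$ as $n\to\infty$, Theorem \ref{fbp_formula} gives $f_{\rho_n}(x)\to f(x)$, and therefore
$$B_n(x) = E_f(f_n^*(x)) - f(x) = f_{\rho_n}(x) - f(x) \longrightarrow 0$$
pointwise for each $x\in B_1$, as claimed. The single step that genuinely requires care is the support argument that justifies replacing the truncated integral over $[-1,1]$ by the full convolution over $\mathbb{R}$; once that is in place, the rest is linearity of expectation plus a direct appeal to the deterministic inversion. I would also remark that the interchange of expectation and integration is harmless here, since $K_{\rho_n}$ is bounded and $T_{\mu}f$ is integrable on the compact cylinder, so no integrability obstruction arises.
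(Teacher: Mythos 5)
Your proposal is correct and follows essentially the same route as the paper: collapse the i.i.d.\ sum to a single expectation, kill the noise term via zero mean and independence, identify the remaining expectation (with uniform density $\tfrac{1}{4\pi}$ on $Z$) with $f_{\rho_n}(x)$, and invoke Theorem \ref{fbp_formula}. You additionally make explicit the support argument justifying the extension of the $s$-integral from $[-1,1]$ to $\mathbb{R}$, a step the paper's proof leaves implicit.
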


\begin{proof}
	It suffices to show that $E_f(f_n^{*}(x))=f_{\rho_n}(x)$ where $f_{\rho_n}(x)$ is given by Theorem (\ref{fbp_formula}). Then since $\rho_n\to 0$ as $n \to \infty$, hence $E_f(f_n^{*}(x))=f_{\rho_n} \to f(x)$ pointwise. In what follows, we will say that the i.i.d random variables $\theta_i$ have the same distribution as some random variable $\theta$, all $s_i$ are distributed with the same distribution as some random variable $s$ and similarly $Y$ and $\epsilon$ are random variables with the same distribution as random variables $Y_i$ and $\epsilon_i$ respectively. We will also denote by $E_{(\theta,s)}(\cdot)$ the expected value of a random variable with respect to the joint distribution of $(\theta,s)$ and by $E_{f|(\theta,s)}(\cdot)$ the conditional expectation of a random variable given $(\theta,s)$. Consider,
	\begin{align*}
	E_f(f_n^*(x))&=\frac{1}{n}E_f(\sum_{i=1}^n e^{-\mu x\cdot \theta_i^{\perp}}K_{\rho_n}(\langle x\cdot \theta_i \rangle -s_i)Y_i)\\
	&=E_f(e^{-\mu x\cdot \theta^{\perp}}K_{\rho_n}(\langle x\cdot \theta \rangle -s)Y)\\
	&=E_{(\theta,s)}\big(E_{f|(\theta,s)}(e^{-\mu x\cdot \theta^{\perp}}K_{\rho_n}(\langle x\cdot \theta \rangle -s)(T_{\mu}f(\theta,s)+\epsilon))\big) \quad \text{(law of iterated expectaion)}\\
	&=E_{(\theta,s)}\big(E_{f|(\theta,s)}(e^{-\mu x\cdot \theta^{\perp}}K_{\rho_n}(\langle x\cdot \theta \rangle -s)(T_{\mu}f(\theta,s))) \quad (\epsilon\text{ has mean  }0)\\
	&=E_{(\theta,s)}(e^{-\mu x\cdot \theta^{\perp}}K_{\rho_n}(\langle x\cdot \theta \rangle -s)(T_{\mu}f(\theta,s)))\\
	&=\frac{1}{4\pi}\int_{\mathrm{S}^1}e^{-\mu x\cdot \theta^{\perp}}\int_{-1}^{1}K_{\rho_n}(\langle x\cdot \theta \rangle -s)(T_{\mu}f(\theta,s))ds d\theta\\
	&=f_{\rho_n}(x)
	\end{align*}
\end{proof}
\section{Optimality of the estimator}
In this section we will show first of all that while the bias of the estimator decreases as bandwidth goes to zero, the variance increases as bandwidth decreases. Thus an optimal rate of convergence can be obtained by finding a suitable bandwidth $\rho_n$ which balances the bias and the variance term. Furthermore, we will establish the optimality of the proposed estimator under both semi-norms $d_1$ and $d_2$ as defined in Section 2. Let us now analyze the bias and the variance terms one by one. It is easy to check that for $\beta>1$ the following relations hold, 
\begin{align}
\vert I_{\rho_n}(t)-1\vert &\leq (\vert t\vert \rho_n)^{\beta}\label{relation1}\\
\vert I_{\rho_n}(t)-1\vert & \leq \bigg[\frac{2 \vert t\vert \rho_n}{1+\vert t\vert \rho_n} \bigg]^{\beta}\label{relation2}
\end{align} 
Consider first the bias term, $B_n(x)=f_{\rho_n}(x)-f(x)=\delta^{1/\rho_n}\star f(x) -f(x)$. Then for any fixed point $x\in B_1(x)$ and $\beta>1$:

\begin{align}
 B_n(x)&=\vert(\delta^{1/\rho_n}\star f(x_0)-f(x_0))\vert \nonumber \\
&\leq\frac{1}{2\pi}\int_{\mathbb{R}}\vert(I_{\rho_n}(\vert \xi\vert)-1)\vert \vert\tilde{f}(\xi)\vert d\xi \nonumber\\
&\leq \frac{1}{2\pi} \int_{\mathbb{R}} \vert \tilde{f}(\xi)\vert (2( \vert \xi \vert \rho_n))^{\beta}/(1+(\vert\xi\vert \rho_n)^{\beta}) d\xi \quad \quad\quad\quad \quad(\text{using } (\ref{relation2})) \nonumber\\
&=\frac{\rho_n^{\beta}}{\pi}[\int_{\mathbb{R}^2}\vert\tilde{f}(\xi)\vert^2 \vert \xi \vert ^{2\beta}d\xi]^{\frac{1}{2}} [\int_{\mathbb{R}^2} (1+(\vert\xi\vert \rho_n)^{\beta})^{-2}d\xi]^{\frac{1}{2}} \quad \quad (\text{using H\"{o}lder's inequality})\nonumber\\
&={c_1}{\rho_n^{\beta-1}}, \quad \quad \quad c_1>0
\end{align}

\noindent Anticipating the calculations required to show optimality using norm $d_2$, we also find an estimate for $\vert\vert B_n(x)\vert\vert_2^2$.
\begin{align}
\vert\vert B_n(x)\vert\vert_2^2&=\vert \vert \delta^{1/\rho_n}\star f(x) -f(x)\vert\vert_2^2\nonumber\\
&=\frac{1}{2\pi}\int_{\mathbb{R}^2} \vert (I_{\rho_n}(\vert \xi\vert)-1)\vert^2\vert \tilde{f}(\xi)\vert^2 d\xi \quad \quad (\text{using Parseval's theorem})\nonumber\\
&\leq \frac{1}{2\pi}\int_{\mathbb{R}^2}\vert\tilde{f}(\xi)\vert^2 (\vert \xi\vert\rho_n)^{2\beta}\quad \quad \quad \quad\quad \quad \quad \text{(using \ref{relation1})}\\
&\leq \frac{L\rho_n^{2\beta}}{2\pi}={c_2\rho_n^{2\beta}}
\end{align}
where $c_2=L/2\pi$.
Now we estimate the variance. 
\begin{Lemma}
	$V_n^2(x)=E_f\bigg((f_n^*(x)-E_f(f_n^*(x))^2)\bigg)\leq c_3/n\rho_n^3$ for $x \in B_1(x)$ and for some constant $c_3>0$. From this it also follows that for $x\in B_1(x)$, $\vert\vert V_n(x)\vert\vert_2^2\leq c_4/n\rho_n^3$ for some constant $c_4$.
\end{Lemma}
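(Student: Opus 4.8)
The plan is to exploit the i.i.d.\ structure of the summands and reduce the whole estimate to an $L^2$ bound on the kernel $K_{\rho_n}$, whose dependence on $\rho_n$ is controlled by its explicit Fourier transform \eqref{Filter}. Write $\xi_i = e^{-\mu x\cdot\theta_i^{\perp}}K_{\rho_n}(\langle x\cdot\theta_i\rangle - s_i)Y_i$, so that $f_n^*(x) = \frac{1}{n}\sum_i \xi_i$. Since the triples $(\theta_i,s_i,Y_i)$ are i.i.d., the $\xi_i$ are i.i.d., and hence $V_n^2(x) = \mathrm{Var}(f_n^*(x)) = \frac{1}{n}\mathrm{Var}(\xi_1) \leq \frac{1}{n}E_f(\xi_1^2)$. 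First I would compute $E_f(\xi_1^2)$ by conditioning on $(\theta,s)$ exactly as in the proof of asymptotic unbiasedness: since $\epsilon$ is independent of $(\theta,s)$ with mean $0$ and variance $\sigma^2$, one has $E_{f|(\theta,s)}(Y^2) = (T_\mu f(\theta,s))^2 + \sigma^2$, so that
$$E_f(\xi_1^2) = \frac{1}{4\pi}\int_{\mathrm{S}^1}\int_{-1}^1 e^{-2\mu x\cdot\theta^{\perp}} K_{\rho_n}^2(\langle x\cdot\theta\rangle - s)\big[(T_\mu f(\theta,s))^2 + \sigma^2\big]\, ds\, d\theta,$$
using that $(\theta,s)$ is uniform on $Z$ with density $1/(4\pi)$.

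Next I would discard every factor except the kernel by a uniform bound. For $x\in B_1$ and $\theta\in\mathrm{S}^1$ one has $|x\cdot\theta^{\perp}|\leq 1$, so $e^{-2\mu x\cdot\theta^{\perp}}\leq e^{2|\mu|}$. Because $\beta>1$, assumption A2 places $f$ in a Sobolev space that embeds into the bounded continuous functions on $\mathbb{R}^2$; as $f$ is also compactly supported in $B_1$, each line integral defining $T_\mu f$ runs over a chord of length at most $2$ with integrand bounded by $e^{|\mu|}\|f\|_\infty$, so $\sup_{(\theta,s)}|T_\mu f(\theta,s)|$ is finite. Hence $(T_\mu f)^2 + \sigma^2 \leq M$ for a constant $M$ depending only on $f,\mu,\sigma$, and
$$E_f(\xi_1^2) \leq \frac{e^{2|\mu|} M}{4\pi}\int_{\mathrm{S}^1}\Big(\int_{-1}^1 K_{\rho_n}^2(\langle x\cdot\theta\rangle - s)\, ds\Big) d\theta \leq \frac{e^{2|\mu|} M}{4\pi}\Big(\int_{\mathbb{R}} K_{\rho_n}^2(u)\, du\Big)\int_{\mathrm{S}^1} d\theta,$$
where the last step extends the inner integral to all of $\mathbb{R}$ after the substitution $u=\langle x\cdot\theta\rangle - s$ and uses that the resulting bound no longer depends on $\theta$.

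The crux is the $\rho_n$-dependence of $\int_{\mathbb{R}} K_{\rho_n}^2$, and this is precisely where the exponent $3$ is generated. By Parseval's theorem and the explicit formula \eqref{Filter} for $\widetilde{K}_{\rho_n}$,
$$\int_{\mathbb{R}} K_{\rho_n}^2(u)\, du = \frac{1}{2\pi}\int_{\mathbb{R}} |\widetilde{K}_{\rho_n}(t)|^2\, dt = \frac{1}{\pi}\int_{|\mu|}^{\sqrt{(1/\rho_n^2)+\mu^2}} t^2\, dt = \frac{1}{3\pi}\big[(1/\rho_n^2 + \mu^2)^{3/2} - |\mu|^3\big],$$
which is $O(\rho_n^{-3})$ as $\rho_n\to 0$. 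Combining the three displays yields $E_f(\xi_1^2)\leq c/\rho_n^3$ for a constant $c>0$, and dividing by $n$ gives $V_n^2(x)\leq c_3/(n\rho_n^3)$.

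Finally, since this bound is uniform over $x\in B_1$, the $L^2$ statement follows immediately by integrating the pointwise estimate over the disk: $\|V_n(x)\|_2^2 = \int_{B_1} V_n^2(x)\, dx \leq \frac{c_3}{n\rho_n^3}\,|B_1| = c_4/(n\rho_n^3)$. I expect the only genuinely delicate point to be the kernel estimate in the third display, in particular checking that the $t^2$ weight on the Fourier side produces cubic rather than quadratic growth in $1/\rho_n$, together with the appeal to $\beta>1$ guaranteeing that $T_\mu f$ is bounded; the remaining steps are routine bookkeeping.
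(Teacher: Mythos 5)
Your proposal is correct and follows essentially the same route as the paper: reduce to $\frac{1}{n}\mathrm{Var}(\xi_1)\le\frac{1}{n}E_f(\xi_1^2)$ via the i.i.d.\ structure, bound the exponential weight and $T_\mu f$ uniformly (the paper uses $|T_\mu f|\le 2e^{|\mu|}L$ where you invoke the Sobolev embedding explicitly), extend the $s$-integral to $\mathbb{R}$ and apply Parseval with the explicit filter \eqref{Filter} to get the $\rho_n^{-3}$ rate, then integrate over $B_1$ for the $L^2$ claim. The only cosmetic difference is that the paper splits the variance into a signal-variance term plus a noise term rather than bounding $\mathrm{Var}(\xi_1)$ by the second moment directly.
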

\begin{proof}
	In the following, $Var$ will denote the variance as per standard notation. First of all, note that $E_f(f_n^*(x))=f_{\rho_n}(x)$ and $s_i,\theta_i$ and $Y_i$ are i.i.d. random variables.  Thus,
	\begin{align*}
	V_n^2(x)&=E_f\bigg((f_n^*(x)-E_f(f_n^*(x)))^2\bigg)\nonumber \\
	%&=E_{(\theta,s)}\bigg(E_{f|(\theta,s)}\big(f_n^*(x)-f_{\rho_n}(x)\big)^2\bigg)\quad \text{(law of iterated expectation)}\\
	&=\frac{1}{n}\bigg(Var_{f}(e^{-\mu(x\cdot\theta^\perp)}K_{\rho_n}(x\cdot\theta-s)T_{\mu}f(\theta,s)) \bigg)+\frac{1}{n}\bigg(E_f(e^{-\mu(x\cdot\theta^\perp)}K_{\rho_n}(x\cdot\theta-s)\epsilon^2)\bigg) \\
	%&=\frac{1}{n}E_{(\theta,s)}\bigg(e^{-2\mu(x\cdot\theta^\perp)}K^2_{\rho_n}(x\cdot\theta-s) Var_{f|(\theta,s)}(\epsilon) \bigg)\\
	&\leq\frac{\sigma^2+4e^{\lvert \mu \rvert}L^2}{4\pi n}\int_{\mathrm{S}^1}e^{-2\mu (x\cdot\theta^{\perp})}\int\limits_{-1}^{1}K^2_{\rho_n}(x\cdot\theta-s)dsd\theta
	\end{align*}
	where we use the fact that since $f\in H(\beta,L)$ is compactly supported in $B_1(x)$, we get $\lvert T_{\mu}f(\theta,s)\rvert \leq 2e^{|\mu|}L.$
	Let us now estimate:
	\begin{align}
	\int\limits_{-1}^{1}K^2_{\rho_n}(x\cdot\theta-s)ds &\leq \int\limits_{-\infty}^{\infty}\vert K_{\rho_n}(s)\vert ^2ds \nonumber\\
	 &\leq\int\limits_{-\infty}^{\infty}\vert \widetilde{K}_{\rho_n}(s)\vert ^2ds \quad \quad \quad \quad \text{(using Parseval's theorem)}\nonumber \\
	&= \frac{1}{3}\bigg[({(1/\rho_n^2)+\mu^2})^{3/2}-\vert\mu\vert^3\bigg] \nonumber \\
	&=\frac{1}{3} \bigg[\big(({(1/\rho_n^2)+\mu^2})^{1/2}-\vert\mu\vert\big)\big((1/\rho_n^2)+2\mu^2+\vert\mu\vert((1/\rho_n^2)+\mu^2)^{\frac{1}{2}}\big)\bigg] \nonumber \\
	&=\frac{1}{3}\bigg[\frac{(1/\rho_n^2)\big((1/\rho_n^2)+2\mu^2+\vert\mu\vert((1/\rho_n^2)+\mu^2)^{\frac{1}{2}}\big)}{\big(({(1/\rho_n^2)+\mu^2})^{1/2}+\vert\mu\vert\big)}\bigg]\nonumber \\
	&\leq \frac{(3+\sqrt{2})(1/\rho_n^3)}{3} \nonumber
	\end{align}
	where we have used the fact that we choose $\vert \mu \vert \leq (1/\rho_n)$.
\noindent Thus 
\begin{align}
	V_n^2(x)& \leq \frac{(3+\sqrt{2})(\sigma^2+4e^{|\mu|}L^2)}{4 \pi n \rho_n^3}\int_{\mathrm{S}^1}e^{-2\mu x\cdot \theta^{\perp}}d\theta \nonumber \\
	&\leq \frac{c_3}{n\rho_n^3}\quad \quad \quad \quad \quad (\text{for }x \text{ in }B_1(x) )
\end{align}
where $c_3>0$ is a constant. Now $\vert\vert V_n(x)\vert\vert_2^2=\int_{x\in B_1(x)}V_n^2(x)dx\leq c_4/n\rho_n^3$ for some constant $c_4$.

\end{proof}
\begin{theorem}\label{ub_thm2}
Let $f\in H(\beta,L)$ where $\beta>1$ and $f_n^*(x)$ be the estimator defined in section \ref{estimator}. Let $\theta_i,s_i$ for $i=1.\dots, n$ be i.i.d.\ random variables and the observation model corresponding to the problem of ERT be given by (\ref{obs_model}). Let $x_0\in B_1(x)$ be some fixed point. In the the definition of risk in section \ref{definitions} let us use the seminorm $d_1(f,g)=\vert f(x_0)-g(x_0)\vert$ where $x_0\in B_1(x)$ is some arbitrary point. Let $\rho_n=\alpha_1n^{-1/(2\beta+1)}$ for some constant $\alpha_1$, then the following upper bound holds:
	$$\sup_{f\in H(\beta,L)}\psi_n^{-2}\text{MSE}(f_n^*,f)\leq C_0$$
	where $\psi_n=n^{-\frac{\beta-1}{2\beta+1}}$.
\end{theorem}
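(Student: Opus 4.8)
The plan is to combine the pointwise bias and variance estimates already obtained, insert the prescribed bandwidth $\rho_n=\alpha_1 n^{-1/(2\beta+1)}$, and verify that each of the two resulting terms is a constant multiple of $\psi_n^2$, with the constants uniform over the class $H(\beta,L)$. First I would recall the decomposition $\text{MSE}(f_n^*,f)=B_n^2(x_0)+V_n^2(x_0)$ from Section \ref{estimator}, evaluated at the fixed point $x_0$. The bias bound derived above gives $\vert B_n(x_0)\vert\leq c_1\rho_n^{\beta-1}$, and the Lemma gives $V_n^2(x_0)\leq c_3/(n\rho_n^3)$, so that
$$\text{MSE}(f_n^*,f)\leq c_1^2\,\rho_n^{2(\beta-1)}+\frac{c_3}{n\rho_n^3}.$$

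The central step is then the bias--variance balancing, which is essentially algebraic. Substituting $\rho_n=\alpha_1 n^{-1/(2\beta+1)}$ I would treat the two terms separately. For the bias term, $\rho_n^{2(\beta-1)}=\alpha_1^{2(\beta-1)}\,n^{-2(\beta-1)/(2\beta+1)}=\alpha_1^{2(\beta-1)}\psi_n^2$. For the variance term, $n\rho_n^3=\alpha_1^3 n^{1-3/(2\beta+1)}=\alpha_1^3 n^{2(\beta-1)/(2\beta+1)}$, so $c_3/(n\rho_n^3)=(c_3/\alpha_1^3)\,\psi_n^2$. Hence
$$\psi_n^{-2}\,\text{MSE}(f_n^*,f)\leq c_1^2\alpha_1^{2(\beta-1)}+\frac{c_3}{\alpha_1^3}=:C_0,$$
a finite constant depending only on $\beta,L,\mu,\sigma^2$ and the free parameter $\alpha_1$. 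This exponent bookkeeping is exactly why the rate $\rho_n\sim n^{-1/(2\beta+1)}$ is forced: it is the unique power of $n$ making the two exponents coincide.

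Finally, to pass from the pointwise bound to the supremum over $f\in H(\beta,L)$, I would check that $c_1$ and $c_3$ do not depend on the particular $f$. The constant $c_1$ comes solely from Assumption A2 (through the factor $\sqrt{L}$) together with the convergent integral $\int_{\mathbb{R}^2}(1+(\vert\xi\vert\rho_n)^\beta)^{-2}\,d\xi$, whose finiteness (after the substitution $u=\vert\xi\vert\rho_n$ yielding a $\rho_n^{-2}$ scaling) requires precisely $\beta>1$ and is uniform in $f$; the constant $c_3$ uses only the uniform bound $\vert T_\mu f(\theta,s)\vert\leq 2e^{\vert\mu\vert}L$ and the noise variance $\sigma^2$. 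Since neither constant depends on the choice of $f\in H(\beta,L)$, the bound on $\psi_n^{-2}\,\text{MSE}$ holds uniformly, giving $\sup_{f\in H(\beta,L)}\psi_n^{-2}\,\text{MSE}(f_n^*,f)\leq C_0$. I expect the only genuinely delicate point to be this verification of uniformity of the constants across the class, rather than the balancing computation itself, which is routine once the bias and variance lemmas are in hand.
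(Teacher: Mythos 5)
Your proposal is correct and follows the same route as the paper: the bias--variance decomposition $\text{MSE}=B_n^2(x_0)+V_n^2(x_0)$, the bounds $c_1^2\rho_n^{2\beta-2}+c_3/(n\rho_n^3)$, and the substitution $\rho_n=\alpha_1 n^{-1/(2\beta+1)}$ balancing the two exponents at $n^{-2(\beta-1)/(2\beta+1)}=\psi_n^2$. Your explicit check that $c_1$ and $c_3$ depend only on $L$, $\mu$, $\sigma^2$ and $\beta$ (not on the particular $f$) is a worthwhile addition that the paper leaves implicit.
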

\begin{proof}
	\begin{align*}
	\text{MSE}(f_n^*,f)&=B_n^2(x_0)+V_n^2(x_0)\\
	&\leq c_1^2\rho_n^{2\beta-2}+\frac{c_3}{n\rho_n^3}.
	\end{align*}
	The minimum of the RHS is obtained for $\rho_n^*=(\frac{3c_3}{2c_1^2(\beta-1)})^{\frac{1}{2\beta+1}}[n^{-\frac{-1}{2\beta+1}}]$. With this choice of $\rho_n=\rho_n^*$, MSE$(f_n^*,f)=\mathcal{O}(n^{-(2\beta-2)/(2\beta+1)})$. 
\end{proof}
\begin{theorem} \label{ub_thm1}
Let $f\in H(\beta,L)$ where $\beta>1$ and $f_n^*(x)$ be the estimator defined in section \ref{estimator}. Let $\theta_i,s_i$ for $i=1.\dots, n$ be i.i.d.\ random variables and the observation model corresponding to the problem of ERT be given by (\ref{obs_model}). Consider the seminorm given by $d_2(f,g)=\vert\vert f-g\vert\vert_2$ where $\vert \vert (\cdot)\vert\vert_2$ indicates the $L_2$ norm as usual. Let $\rho_n=\alpha_2 n^{-1/(2\beta+3)}$, where $\alpha_2 $ is a constant. Then the following upper bound holds,
$$\sup_{f\in H(\beta,L)}\Psi_n^{-2}\text{MISE}(f_n^*,f)\leq C_1$$
where $\Psi_n=n^{-\beta/(2\beta+3)}$ and a positive constant $C_1$.
\end{theorem}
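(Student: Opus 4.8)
The plan is to run exactly the balancing argument of Theorem~\ref{ub_thm2}, but now feeding in the \emph{integrated} ($L^2$) versions of the bias and variance estimates rather than their pointwise counterparts. I would begin from the decomposition
$$\text{MISE}(f_n^*,f)=\|B_n(x)\|_2^2+\|V_n(x)\|_2^2,$$
and invoke the two ingredients already established: the integrated bias bound $\|B_n(x)\|_2^2\leq c_2\rho_n^{2\beta}$, obtained via Parseval's theorem together with relation~(\ref{relation1}), and the integrated variance bound $\|V_n(x)\|_2^2\leq c_4/(n\rho_n^3)$ from the Lemma above. Adding them gives
$$\text{MISE}(f_n^*,f)\leq c_2\rho_n^{2\beta}+\frac{c_4}{n\rho_n^3},$$
and this holds uniformly over $f\in H(\beta,L)$ because $c_2=L/2\pi$ depends on $f$ only through the class parameter $L$ while $c_4$ is an absolute constant.

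The next step is to optimize over the bandwidth $\rho_n$. The squared bias grows like $\rho_n^{2\beta}$ while the variance decays like $(n\rho_n^3)^{-1}$, so balancing the two forces $\rho_n^{2\beta+3}\sim n^{-1}$, i.e.\ $\rho_n\sim n^{-1/(2\beta+3)}$, which is precisely the prescribed choice $\rho_n=\alpha_2 n^{-1/(2\beta+3)}$. Substituting this value I would verify that both terms collapse to the same power of $n$: the bias term becomes $c_2\alpha_2^{2\beta}\,n^{-2\beta/(2\beta+3)}$ and the variance term becomes $c_4\alpha_2^{-3}\,n^{-2\beta/(2\beta+3)}$, using the identity $-1+3/(2\beta+3)=-2\beta/(2\beta+3)$. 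Since $\Psi_n^2=n^{-2\beta/(2\beta+3)}$, this yields
$$\text{MISE}(f_n^*,f)\leq \big(c_2\alpha_2^{2\beta}+c_4\alpha_2^{-3}\big)\,\Psi_n^2,$$
and taking the supremum over $H(\beta,L)$ delivers the claim with $C_1=c_2\alpha_2^{2\beta}+c_4\alpha_2^{-3}$.

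There is no genuine obstacle here: the substantive work was carried out in proving the bias and variance estimates, and this theorem sits on top of them as a one-line optimization. The only point demanding care is the bookkeeping of exponents, namely that the integrated squared bias contributes $\rho_n^{2\beta}$ (as opposed to the pointwise $\rho_n^{2\beta-2}$ appearing in Theorem~\ref{ub_thm2}); this extra factor of $\rho_n^2$ is exactly what shifts the optimal bandwidth exponent from $1/(2\beta+1)$ to $1/(2\beta+3)$ and the rate from $\psi_n$ to $\Psi_n$. I would close by noting that the uniformity of the bound over the class is automatic, since neither $c_2$ nor $c_4$ depends on the particular $f$ beyond $L$, so the supremum is absorbed harmlessly into the constant $C_1$.
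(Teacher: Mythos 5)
Your proposal is correct and follows the paper's proof essentially verbatim: both decompose the MISE into the integrated squared bias and variance, apply the already-established bounds $c_2\rho_n^{2\beta}$ and $c_4/(n\rho_n^3)$, and substitute the bandwidth $\rho_n\sim n^{-1/(2\beta+3)}$ that balances the two terms to obtain the rate $\Psi_n^2=n^{-2\beta/(2\beta+3)}$. Your added remarks on the uniformity of the constants over $H(\beta,L)$ and on the exponent bookkeeping are accurate and consistent with the paper.
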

\begin{proof}
\begin{align*}
\text{MISE}(f_n^*,f)&=\vert \vert B_n(x)\vert\vert_2^2+\vert \vert V_n(x)\vert\vert_2^2\\ 
&\leq c_2\rho_n^{2\beta}+c_4/n\rho_n^3.
\end{align*}
Note that the minimum of the RHS above is attained for $\rho_n^*=(\frac{3c_4}{2c_2\beta})^{\frac{1}{2\beta+3}}[n^{\frac{-1}{2\beta+3}}]$. With this choice of $\rho_n=\rho_n^*$, $\text{MISE}(f_n^*,f)=\mathcal{O}(n^{-2\beta/(2\beta+3)})$. This completes our proof.
\end{proof}

\noindent The upper bounds established in Theorems  \ref{ub_thm2} and {\ref{ub_thm1}}  above imply that the minimax risks for the estimator using the two seminorms $d_2$ and $d_1$ is bounded above by $C_1\Psi_n^2$ and $C_2\psi_n^2$ respectively where $\Psi_n$ and $\psi_n$ are sequences that go to zero as $n \to \infty$ . As per Definition (\ref{optimality}), to establish the optimality of the estimator we need to show that each of the two minimiax risks also satisfy the corresponding lower bounds. To that end, at first we make the following additional assumptions for the observation model \ref{obs_model}:\\
\textbf{Assumption on the distribution of noise (B1):} The random variables $\epsilon_i$ are i.i.d having a distribution $G(\cdot)$ that satisfies :
\begin{align}\label{noise_assum}
\int\limits_{-\infty}^{\infty}\ln\frac{dG(u)}{dG(u+v)}dG(u)\leq I_0v^2, \quad \quad \vert v\vert \leq v_0
\end{align}
where $I_0>0$ and $v_0>0$ are some constants. \\
\noindent \textbf{Assumption on design points (B2)}: Any design, i.e. $\{\theta_i,s_i\}_{i=1}^n$ on the cylinder $Z=\mathrm{S}^1\times [-1,1]$ will be said to be feasible if any non-negative measurable function $g(\theta,s)$ defined on $Z$ satisfies:
\begin{align}\label{Design_assum}
E_{(\theta,s)}\bigg[\sum\limits_{i=1}^{n}g(\theta_i,s_i)\bigg]\leq C_3\int\limits_{Z}g(\theta,s)dsd\theta.
\end{align}
In what follows, we will assume that the design is feasible in the sense described above.
\begin{theorem} \label{point_optimality} Let $\beta, f, f_n^*, \theta_i,s_i$ as in Theorem \ref{ub_thm2}. If in addition, assumptions B1 and B2 are satisfied by the observation model (\ref{obs_model}) then the following inequality holds:
$$\liminf_{n\to \infty}\quad\inf_{\hat{f}_n}\quad\sup_{f\in H(\beta,L)}\psi_n^{-2} \text{MSE}(\hat{f}_n,f)\geq c_0$$
where $\psi_n$ is the same sequence as in Theorem {\ref{ub_thm2}}, $\displaystyle{\inf_{\hat{f}_n}}$ denotes the infimum over all estimators and $c_0>0$ is some constant.
\end{theorem}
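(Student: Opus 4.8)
The plan is to prove this pointwise lower bound by the method of \textbf{two hypotheses} (Le Cam's reduction), which is the standard and sufficient device when the loss is the pointwise semi-norm $d_1$. I would exhibit two functions $f_0,f_1\in H(\beta,L)$ that are simultaneously (i) well separated at $x_0$, with $|f_1(x_0)-f_0(x_0)|\ge 2s_n$ and $s_n\asymp\psi_n$, and (ii) statistically indistinguishable, meaning that the Kullback distance $I(P_{f_1},P_{f_0})$ between the laws of the full data $\{(\theta_i,s_i,Y_i)\}_{i=1}^n$ stays bounded as $n\to\infty$. Once these hold, the reduction-to-testing scheme of \cite{Tsybakov_book} together with Markov's inequality gives $\inf_{\hat f_n}\sup_{f\in H(\beta,L)}E_f d_1^2(\hat f_n,f)\ge s_n^2\,\inf_{\hat f_n}\max_{j\in\{0,1\}}P_{f_j}(|\hat f_n(x_0)-f_j(x_0)|\ge s_n)\ge c\,s_n^2\asymp\psi_n^2$, which is exactly the asserted bound with $c_0=c\,\alpha_1^{2(\beta-1)}$ (the testing error being bounded below by a constant whenever the Kullback distance is bounded).

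For the construction I would take $f_0\equiv 0$, which lies in $H(\beta,L)$, and $f_1(x)=\gamma_n\,\phi((x-x_0)/h_n)$, where $\phi$ is a fixed smooth bump supported in the unit ball, $h_n\to 0$ is a perturbation scale, and $\gamma_n$ an amplitude to be chosen. For $n$ large the support of $f_1$ lies in $B_1(x)$, so A1 holds. Passing to Fourier variables and rescaling by $h_n$, the left-hand side of A2 for $f_1$ is of order $\gamma_n^2 h_n^{2-2\beta}$; hence choosing $\gamma_n=c_\phi\,h_n^{\beta-1}$ with $c_\phi$ small keeps $f_1$ inside $H(\beta,L)$. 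With this choice the separation at $x_0$ is $|f_1(x_0)-f_0(x_0)|=\gamma_n|\phi(0)|\asymp h_n^{\beta-1}$.

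The remaining and most delicate point is to bound the Kullback distance. Since the design law does not depend on $f$, conditioning on $\mathcal{D}_n$ and using that the $n$ observations are independent, the Kullback distance tensorizes, and after the substitution $u=Y_i-T_\mu f_1(\theta_i,s_i)$ one finds $I(P_{f_1},P_{f_0})=E_{(\theta,s)}\big[\sum_{i=1}^n\int\ln\frac{dG(u)}{dG(u+v(\theta_i,s_i))}\,dG(u)\big]$, where $v(\theta,s)=T_\mu f_1(\theta,s)-T_\mu f_0(\theta,s)$ is precisely the per-observation shift appearing in assumption B1. Because $f_1$ is a bump of height $\gamma_n$ and width $h_n$ and the weight $e^{\mu x\cdot\theta^\perp}$ is bounded on $B_1(x)$, the line integral obeys $\|v\|_\infty\lesssim\gamma_n h_n$, so $|v|\le v_0$ for large $n$ and B1 applies to each summand; moreover $v(\theta,\cdot)$ is supported in an $s$-interval of width $\asymp h_n$, giving $\int_Z v^2\lesssim\gamma_n^2 h_n^3$. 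Applying B1 termwise and then the feasibility assumption B2 to the non-negative function $v^2$ to convert the expected empirical sum into an integral yields $I(P_{f_1},P_{f_0})\lesssim n\int_Z v^2\lesssim n\,\gamma_n^2 h_n^3=n\,h_n^{2\beta+1}$.

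The rate is then forced by the choice $h_n\asymp n^{-1/(2\beta+1)}$: this keeps the Kullback distance bounded while making the separation $\gamma_n\asymp h_n^{\beta-1}=n^{-(\beta-1)/(2\beta+1)}\asymp\psi_n$, so that $s_n\asymp\psi_n$ as required, and the testing bound completes the argument. The hard part is this three-way balancing act that the single scale $h_n$ must perform at once: keeping $f_1$ within $H(\beta,L)$ through the Fourier constraint A2, keeping the two data distributions close through B1 and B2, and keeping the pointwise gap at $x_0$ of order $\psi_n$. Making this precise requires the two quantitative estimates on the perturbation, namely the rescaling computation for the Sobolev norm and the bounds $\|v\|_\infty\lesssim\gamma_n h_n$ and $\int_Z v^2\lesssim\gamma_n^2 h_n^3$ for its exponential Radon transform, and this is where the real work lies.
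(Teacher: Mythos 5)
Your proposal is correct and follows the same overall architecture as the paper's proof: a two-point (Le Cam) reduction with $f_0\equiv 0$ and a rescaled bump $f_1=\gamma_n\phi((x-x_0)/h_n)$, $\gamma_n\asymp h_n^{\beta-1}$, $h_n=n^{-1/(2\beta+1)}$, membership in $H(\beta,L)$ checked by the Fourier rescaling, and the Kullback distance controlled via B1 and B2 exactly as in display (\ref{estimate1}). Where you genuinely diverge is in the key analytic step, the bound on $\int_Z|T_\mu f_1|^2$. The paper invokes the Sobolev continuity estimate of Rigaud--Lakhal, $\|T_\mu f_1\|_{L^2(Z)}^2\le\bar K\|f_1\|_{L^2(\mathbb{R}^2)}^2$, and then computes $\|f_1\|_2^2$ by a change of variables; you instead estimate $T_\mu f_1$ directly from the geometry: the chord of a line through a ball of radius $h_n$ has length $O(h_n)$, the weight $e^{\mu x\cdot\theta^\perp}$ is bounded on $B_1(x)$, so $\|T_\mu f_1\|_\infty\lesssim\gamma_n h_n$, and the $s$-support has width $O(h_n)$, giving $\int_Z|T_\mu f_1|^2\lesssim\gamma_n^2h_n^3=c\,h_n^{2\beta+1}$. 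Your route is more elementary (no mapping-property input) and, importantly, it transparently produces the $h_n^{2\beta+1}$ scaling that makes $nI(P_0,P_1)$ bounded: the bare $L^2\to L^2$ bound gives only $\|f_1\|_2^2\asymp\gamma_n^2h_n^2=c\,h_n^{2\beta}$ in two dimensions, which is short by a factor of $h_n$ (the paper's displayed computation asserts $h^{2\beta+1}$ where the planar Jacobian yields $h^{2\beta}$), and that missing factor is exactly the $s$-localization your argument supplies explicitly. Two small items to make your write-up airtight: require $\phi(0)\neq 0$ (the paper imposes $\eta_0(0)>0$) so the separation at $x_0$ is genuinely of order $\psi_n$, and note that $\|v\|_\infty\lesssim\gamma_nh_n\to 0$ guarantees $|v|\le v_0$ for large $n$ so that B1 is applicable termwise before B2 converts the expected empirical sum into the integral over $Z$.
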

\begin{proof}
The proof method follows that in \cite[Theorem 4]{Tsybakov_1991} and we will adapt their proof wherever needed. As noted there, using standard reduction techniques for establishing lower bounds on the minimax risk of regression estimators in a non-parametric setting, the problem can be reduced to {showing that the Kullback distance between the two probability measures corresponding to two appropriately chosen functions (hypothesis) is bounded}, see also \cite[section 2.5]{Tsybakov_book}. Thus consider the functions {(hypothesis)} $f_0(x)=0$ and $f_1(x)=Ah^{\beta-1}\eta_0((x-x_0)/h)$ where $h=n^{-\frac{1}{2\beta+1}}$, $\eta_0(x)\in H(\beta,L)$ is a compactly supported bounded function such that $\eta_0(0)>0$ and $0<A<1$ is a constant. Following \cite{Tsybakov_1991}, we will first show that $f_1(x)\in H(\beta,L)$. Note that:
\begin{align*}
\tilde{f}_1(\xi)&=Ah^{\beta-1}\int \eta_0((x-x_0)/h)e^{i\xi\cdot x}dx=Ah^{\beta-1}e^{i\xi\cdot x_0}\int\limits_{\mathrm{S}^1}\int\limits_{0}^{\infty}(u)\eta_0(u\theta/h)e^{i\xi\cdot u\theta} dud\theta\\
&=Ah^{\beta+1}e^{i\xi\cdot x_0}\int\limits_{\mathrm{S}^1}\int\limits_{0}^{\infty}(\bar{u})\eta_0(\bar{u}\theta)e^{i(h\xi\cdot \theta)\bar{u}}d\bar{u}d\theta=Ah^{\beta+1}e^{i\xi\cdot x_0}\tilde{\eta}_{0}(h\xi).
\end{align*}
Thus,
\begin{align*}
\int (1+\vert \xi^2\vert)^{\beta}\vert \tilde{f}_1(\xi)\vert^2d\xi&=A^2h^{2(\beta+1)}\int (1+\vert \xi\vert^2)^{\beta}\vert \tilde{\eta}_0(h\xi)\vert^2d\xi\\&=A^2\int(h^2+\vert \bar{\xi}\vert ^2)^{\beta}\vert \tilde{\eta}_{0}(\bar{\xi})\vert^2d\bar{\xi}\leq L
\end{align*}
where we have used the fact that $0<h,A<1$ and $\eta_0(x)\in H(\beta,L)$. Also observe that $\vert f_1(x_0)-f_0(x_0)\vert=Ah^{\beta-1}\eta_0(0)$ and $\eta_0(0)>0$ by assumption. Now let $P_0$ and $P_1$ be probability measures corresponding to the experiments with observations given by the regression model (\ref{obs_model}) for $f=f_0$ and $f=f_1$ respectively and {$p_0$ and $p_1$ be the densities corresponding to the measures $P_0$ and $P_1$ respectively}. Then to complete the proof of the theorem it suffices to show the Kullback information distance between the two measures, $I(P_0,P_1)\leq 1/2$. Again, From \cite{Tsybakov_1991},
{
\begin{align}{\label{estimate1}}
I(P_0,P_1)&=\int \ln \bigg(\frac{dP_0}{dP_1}\bigg)dP_0 = E_{f_0}\int \ln\bigg(\frac{dp_0}{dp_1}\bigg)d\nu \quad \quad (\nu \text{ is the Lebesgue measure })\nonumber\\
 &=E_{(\theta,s)}\bigg(E_{f_0\vert (\theta,s)}\int \ln\bigg(\frac{dp_0}{dp_1}\bigg)d\nu\bigg)\nonumber \\&=E_{(\theta,s)}\bigg[\sum\limits_{i=1}^n\int \ln \frac{dG(v-T_{\mu}f_0(\theta_i,s_i))}{dG(v-T_{\mu}f_1(\theta_i,s_i))} dG(v-T_\mu f_0(\theta_i,s_i))\bigg]\nonumber \quad \quad (\text{ see }\cite[(2.36)]{Tsybakov_book})\\&\leq C_3nI_0\int\limits_Z\vert T_{\mu}f_1(\theta,s)\vert^2 dsd\theta \quad \quad \text{(using B1 and B2)}
\end{align}}
To estimate $\int\limits_Z\vert T_{\mu}f_1(\theta,s)\vert^2 dsd\theta $, we will follow \cite[section 4]{Rigaud2015}.
Consider a function $\phi(x)\in \mathcal{S}(\mathbb{R}^2)$ (i.e. Schwartz class) such that $\phi(x)=1$ for $x\in B_1(x)$. Let us introduce \begin{align}\label{wdef}
\bar{w}(x,\theta)=\phi(x)e^{\mu x\cdot\theta^{\perp}}
\end{align}
Clearly for any function $f_1(x)$ supported in $B_1(x)$,
\begin{align*}
T_{\mu}f_1(\theta,s)=T_{\bar{w}}f_1(\theta,s)=\int\limits_{\mathbb{R}^2}\bar{w}(x,\theta)f(x)\delta(x\cdot\theta-s)dx
\end{align*} 
Taking the Fourier transform of $T_{\bar{w}}f_1(\theta,s)$ with respect to the $s$- variable we get the following inequality \cite[equation 27]{Rigaud2015},
\begin{align}
\vert\tilde{T}_{\bar{w}}{f(\theta,t)}\vert^2 \leq (2\pi)^{-1}\vert W_{\bar{w}}\star \tilde{f}(\xi)\vert^2
\end{align}
where $W_{\bar{w}}=\sup\limits_{\theta\in \mathrm{S}^1}\vert \tilde{\bar{w}}(\theta,t)\vert$ and $\tilde{(\cdot)}$ indicates the corresponding Fourier transform (either $1$-d or $2$-d) as usual.
Now from \cite[equation 29]{Rigaud2015},
\begin{align}\label{estimate2}
\vert\vert T_{\mu}f_1(\theta,s)\vert\vert^2_{L^2(Z)} \leq \vert\vert T_{\mu}f_1(\theta,s)\vert\vert^2_{H^{1/2}(Z)}\leq K\vert\vert W_{\bar{w}}\vert \vert^2 _{L^{1}(\mathbb{R}^2)}\vert\vert f_1\vert\vert^2_{L^2({\mathbb{R}^2})}=\bar{K}\vert\vert {f}_1\vert\vert^2_{L^2{(\mathbb{R}^2)}}
\end{align}
where $\bar{K}=K\vert\vert W_{\bar{w}}\vert\vert_{L^1(\mathbb{R}^2)}$. We note in passing that since $\bar{w}(x,\theta)$ is given by (\ref{wdef}), $\vert\vert W_{\bar{w}}\vert\vert_{L^1(\mathbb{R}^2)}$ is finite.

\noindent Now $\vert\vert {f}_1\vert\vert^2_{L^2{(\mathbb{R}^2)}}=A^2h^{2\beta-2}\int\limits_{\mathbb{R}^2}\vert\eta_0((x-x_0)/h)\vert^2 dx=A^2h^{2\beta+1}\int\limits_{\mathbb{R}^2}\vert\eta_0(y)\vert^2 dy$. Since $\eta_0\in H(\beta,L)$ is compactly supported bounded function, thus $\vert\vert \eta_0(y)\vert\vert^2_{2}$ is finite.
Thus,
\begin{align}
I(P_0,P_1)\leq C_3I_0\bar{K}A^2\vert\vert \eta_0(y)\vert\vert_2^2nh^{2\beta+1}=C_3I_0\bar{K}A^2\vert\vert \eta_0(y)\vert\vert_2^2 \quad \quad (h=n^{-\frac{1}{2\beta+1}})
\end{align}
Thus if we choose $A$ to be small enough, $I(P_0,P_1)\leq 1/2$.
\end{proof}
\begin{remark}
Note that Theorems \ref{ub_thm2} and \ref{point_optimality} together establish the optimality of the convergence rate of minimax risk for the estimator proposed in Section \ref{estimator} under the seminorm $d_1$.
\end{remark}
\begin{theorem}\label{opti_l2}
Let $\beta, f, f_n^*, \theta_i,s_i$ as in Theorem \ref{ub_thm1}. If in addition, assumptions B1 and B2 are satisfied by the observation model (\ref{obs_model}) then the following inequality holds:
$$\liminf_{n\to \infty}\quad\inf_{\hat{f}_n}\quad\sup_{f\in H(\beta,L)}\Psi_n^{-2} \text{MISE}(\hat{f}_n,f)\geq c_1$$
where $\Psi_n$ is the same sequence as in Theorem {\ref{ub_thm1}}, $\displaystyle{\inf_{\hat{f}_n}}$ denotes the infimum over all estimators and $c_1>0$ is some constant.
\end{theorem}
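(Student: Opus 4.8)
The plan is to prove the $L^2$ (integrated-risk) lower bound by the multiple-hypotheses (Assouad) method, following the integrated-risk argument of \cite{Tsybakov_1991} and \cite[Chapter 2]{Tsybakov_book}, rather than the two-point construction of Theorem \ref{point_optimality}. The reason a single bump is not enough is quantitative: the two-point construction with a bump of scale $h=n^{-1/(2\beta+3)}$ gives an $L^2$ separation of order $h^{2\beta+2}$, which is smaller than $\Psi_n^2=h^{2\beta}$ by exactly the factor $h^{-2}$. This missing factor is precisely the number of disjoint bumps one can pack into the ball, so the remedy is to spread the perturbation over a whole grid of bumps and invoke Assouad's lemma.

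First I would fix $h=n^{-1/(2\beta+3)}$, choose centres $x_1,\dots,x_M$ of a maximal family of disjoint balls of radius $h$ inside $B_{1/2}(0)$ (so that $M\asymp h^{-2}$ and every hypothesis is supported in $B_1$), and for each $\omega=(\omega_1,\dots,\omega_M)\in\{0,1\}^M$ set
\[
f_\omega(x)=Ah^{\beta}\sum_{k=1}^M\omega_k\,\eta_0\!\big((x-x_k)/h\big),
\]
with $\eta_0$ and $0<A<1$ as in Theorem \ref{point_optimality}. The amplitude here is $h^{\beta}$, one power more than in the pointwise construction; this is forced by the fact that there are now $M\asymp h^{-2}$ summands. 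Repeating the Fourier computation of Theorem \ref{point_optimality} for a single bump gives $\int(1+|\xi|^2)^\beta|\widetilde{\phi_k}(\xi)|^2\,d\xi\le A^2h^2L$ for $\phi_k(x)=Ah^{\beta}\eta_0((x-x_k)/h)$; since the $\phi_k$ have pairwise disjoint supports and $\eta_0$ is Schwartz, the Sobolev norms add up to a $\beta$-dependent constant, so $\int(1+|\xi|^2)^\beta|\widetilde{f_\omega}|^2\,d\xi\lesssim M\,A^2h^2\asymp A^2$, which is $\le L$ once $A$ is small; thus every $f_\omega\in H(\beta,L)$. Disjointness of supports also gives the additive $L^2$ separation
\[
\|f_\omega-f_{\omega'}\|_2^2=\rho(\omega,\omega')\,A^2h^{2\beta+2}\,\|\eta_0\|_2^2,
\]
where $\rho$ is the Hamming distance, so the $L^2$ semimetric is additive over the coordinates with per-coordinate weight $\asymp A^2h^{2\beta+2}$.

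The crux is the Kullback bound for neighbouring hypotheses. If $\omega,\omega'$ differ in a single coordinate $k$ then $f_\omega-f_{\omega'}=\pm\phi_k$, and exactly the chain of equalities leading to (\ref{estimate1}), now applied to the pair $(f_\omega,f_{\omega'})$, gives $I(P_\omega,P_{\omega'})\le C_3nI_0\|T_\mu\phi_k\|_{L^2(Z)}^2$ (for $n$ large the amplitude $Ah^\beta$ is below $v_0$, so B1 applies). The mapping bound (\ref{estimate2}), $\|T_\mu g\|_{L^2(Z)}^2\lesssim\|g\|_2^2$, is too lossy here: it would give $I\lesssim nA^2h^{2\beta+2}=A^2n^{1/(2\beta+3)}\to\infty$. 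Instead one must use that $T_\mu$ smooths by half a derivative, $\|T_\mu g\|_{L^2(Z)}^2\lesssim\|g\|_{H^{-1/2}(\mathbb{R}^2)}^2$, which follows from the Fourier-slice identity for the exponential transform underlying \cite[Section 4]{Rigaud2015}. For a bump concentrated at scale $h$ this yields the extra power $\|\phi_k\|_{H^{-1/2}}^2\asymp A^2h^{2\beta+3}$, whence $I(P_\omega,P_{\omega'})\lesssim nA^2h^{2\beta+3}=A^2\,(nh^{2\beta+3})=A^2$, since $nh^{2\beta+3}=1$ by the choice of $h$. Choosing $A$ small makes every neighbouring Kullback distance smaller than any prescribed $\lambda$.

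With the additive separation and the uniform smallness of neighbouring Kullback distances in hand, Assouad's lemma (\cite[Chapter 2]{Tsybakov_book}) gives
\[
\inf_{\hat f_n}\max_{\omega}E_{f_\omega}\|\hat f_n-f_\omega\|_2^2\gtrsim M\,A^2h^{2\beta+2}\asymp h^{2\beta}=n^{-2\beta/(2\beta+3)}=\Psi_n^2,
\]
with a strictly positive implied constant once $\lambda$ (hence $A$) is fixed. Since every $f_\omega\in H(\beta,L)$, the supremum over $H(\beta,L)$ dominates the maximum over $\omega$, and dividing by $\Psi_n^2$ and taking $\liminf_n$ yields the claim with a suitable $c_1>0$. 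The single genuinely new difficulty compared with Theorem \ref{point_optimality} is this smoothing step: one must replace the crude estimate (\ref{estimate2}) by the sharp half-derivative gain, verifying that neither the weight $e^{\mu x\cdot\theta^\perp}$ nor the truncation of $s$ to $[-1,1]$ destroys the factor $h$ that balances bias against variance.
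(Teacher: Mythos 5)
Your construction of the hypothesis class is exactly the paper's: a packing of $M\asymp h^{-2}$ disjoint balls of radius $h=n^{-1/(2\beta+3)}$ carrying bumps of amplitude $\asymp h^{\beta}$, membership in $H(\beta,L)$ via the rescaled Fourier computation, additivity of the squared $L^2$ separation over disjoint supports, and an Assouad-type reduction to bounding the Kullback divergence between hypotheses differing in one coordinate. Where you part ways with the paper is in the Kullback bound itself. The paper keeps the sum $\sum_i (T_\mu g_k(\theta_i,s_i))^2$ conditional on the design and controls it combinatorially: a pointwise chord estimate $|T_\mu g_k(\theta_i,s_i)|\le C_{10} m^{-\beta-1}$ together with the geometric counting result (Lemma \ref{geometric_lemma}, imported from Korostel\"{e}v--Tsybakov) that for \emph{any} design more than half of the balls are met by at most $C_6 n/m$ of the lines $L_i$; restricting to those indices gives $I\lesssim (n/m)\,m^{-2\beta-2}=n h^{2\beta+3}=O(1)$. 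You instead average over the design via B2 and invoke the half-derivative smoothing $\|T_\mu\phi_k\|_{L^2(Z)}^2\lesssim\|\phi_k\|_{H^{-1/2}}^2\asymp A^2h^{2\beta+3}$, arriving at the same $n h^{2\beta+3}=O(1)$. The two steps are quantitatively equivalent --- the counting lemma is the combinatorial avatar of the half-derivative gain --- but they buy different things. The paper's route needs only a sup bound on a line integral over a short chord and works for an arbitrary, even deterministic, design, at the price of the geometric lemma. Your route is analytically cleaner and avoids the counting argument entirely, but it genuinely requires the $H^{-1/2}\to L^2$ continuity of $T_\mu$ on functions supported in $B_1(x)$, which the paper does not establish: its inequality (\ref{estimate2}) is the lossy $L^2\to H^{1/2}$ version, and, as you correctly compute, that version loses exactly the factor $h$ that makes the divergence blow up. You would have to supply the sharper estimate (e.g.\ from the projection-slice identity for $T_\mu$ or the Sobolev estimates of Rigaud--Lakhal, checking that the smooth nonvanishing weight and the restriction to $s\in[-1,1]$ do not disturb it, the latter being automatic since $T_\mu\phi_k$ vanishes for $|s|>1$). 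You flag this as the one new ingredient, and that diagnosis is exactly right; with that estimate supplied, your argument closes and yields the same rate $\Psi_n^2=n^{-2\beta/(2\beta+3)}$ as the paper's.
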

\begin{proof}
First of all, we recall from \cite[section 2.6]{Tsybakov_book} that to establish lower bounds for the convergence rate of the estimators in $L_p$ seminorms requires us to work with many hypotheses (M-hypotheses) instead of just two as we did in the proof of Theorem \ref{point_optimality} above. The proof of this theorem follows that of \cite[Theorem 5]{Tsybakov_1991}. All the geometric arguments in this proof are identical to the geometrical arguments in \cite{Tsybakov_1991} and we only need to change the argument wherever an estimate for the usual Radon transform is to be replaced with an analogous estimate for the exponential Radon transform. For the sake of completeness, we outline the proof given in \cite{Tsybakov_1991} here, adapting it to the case of ERT wherever needed.\\
\noindent Consider a collection of non-intersecting balls $\Delta_k, k\in \{1,\dots,M\}$ inscribed in $B_1(x)$ with center $a_k$ and of radius $1/m$ such that $m$ and $M$ are sequences and $m\to \infty $ as $n\to \infty$. Furthermore, one can choose $m$ and $M$ (the precise choice for $m$ is described later) such that the following relation is satisfied:
\begin{align}
C_4 m^2\leq M\leq C_5 m^2
\end{align} 
Let $\eta(x)$ be a smooth function supported in $B_1(x)$. Then each function $\eta_k(x)=\eta(m(x-a_k))$ is supported respectively in $\Delta_k$. To each $m$-tuple $b=(b_1,\dots,b_m)$ where $b_k$ is either $0$ or $1$, we associate a function $f(x,b)$ supported in $B_1(x)$ such that:
\begin{align*}
f(x,b)=Am^{-\beta}\sum\limits_{k=1}^{M}b_k\eta_k(x)
\end{align*}
where $A>0$ will be chosen in a manner described below. 
We state without proof the following two lemmas from \cite{Tsybakov_1991}:
\begin{Lemma} \cite[Lemma 3]{Tsybakov_1991}
There exists $A_{\beta}>0$ such that for $A<A_{\beta}$, the function $f(x,b)\in H(\beta,L)$ for any $m$-tuple $b$.	
\end{Lemma}
\noindent Consider any design $\mathcal{D}_n=\{(\theta_i,s_i)\}_{i=1}^{i=n}$, and consider the lines $L_i=\{x\in \mathbb{R}^2:x\cdot\theta_i=s_i\}$. Take the set of balls $\Delta_k$ such that each ball intersects at most $C_6n/m$ lines where $C_6>0$ is a constant, whose choice is described in Lemma \ref{geometric_lemma} below. Let the set of indices $J$ be defined as:
\begin{align*}
J=J(\mathcal{D}_n)=\{k\in \{1,\dots,M\}:& \text{ number of lines corresponding to }\mathcal{D}_n\text{ that intersect with }\Delta_k\\&\text{ is less than or equal to }C_6n/m \}
\end{align*}

\begin{Lemma}\cite[Lemma 4]{Tsybakov_1991} \label{geometric_lemma}
There exists $C_6>0$ such that for any design $\mathcal{D}_n$, we have the inequality:$$\mathrm{card}J>M/2.$$
\end{Lemma}
\noindent In what follows, $C_6$ is chosen such that Lemma \ref{geometric_lemma} is satisfied.
\end{proof}
\noindent Following \cite{Tsybakov_1991}, %let us use $b\backslash b_k$ to denote the $M-1$ tuple $\{b_1,\dots,b_{k-1},b_{k+1},\dots,b_M\}$.
 let us also indicate by, $b^{(k,0)}=\{b_1,\dots,b_{k-1},0,b_{k+1},\dots,b_M\}$ and $b^{(k,1)}=\{b_1,\dots,b_{k-1},0,b_{k+1},\dots,b_M\}$ $M$-tuples with fixed $k$-th elements as indicated. Furthemore, we use the following notation for functions:
$$f_{k_0}=f(x,b^{(k,0)}) \quad \text{and}\quad f_{k_1}=f(x,b^{(k,1)}). $$ Let $g_k(x)=f_{k_0}(x)-f_{k_1}(x)$ which is supported only on $\Delta_k$ by construction. %One can show that \cite [equation 20]{Tsybakov_1991}, 
%\begin{align}
%\vert\vert g_k(x)\vert\vert_2^2\leq C_7 m^{-2\beta-2}.
%\end{align}
Let $P_{k_0}$ and $P_{k_1}$ be the probability measures corresponding to the model \ref{obs_model} for $f=f_{k_0}$ and $f=f_{k_1}$. Let $I(P_{k_0}, P_{k_1})$ be the Kullback information distance between these two probability measures. Thus from \cite{Tsybakov_1991}, the desired lower bound for the minimax rate will be obtained if we can show that for a sufficiently small $C_8>0$ such that $m=(C_8n)^{\frac{1}{2\beta+3}}$, $I(P_{k_0}, P_{k_1})<1/2$.
%On estimating the maximum risk by its mean, we also get \cite[equation 19]{Tsybakov_1991},
%\begin{align}
%E_{f{(\cdot)}}(\vert\vert f_n^*-f\vert\vert_2^2)\geq E\bigg[\sum\limits_{k\in J}\bigg(\frac{1}{2^{M-1}}\sum\limits_{b\backslash b_k}&\bigg(\frac{1}{2}E_{f_{k_0}}\int\limits_{\Delta_k}(f_n^*-f_{k_0})^2dx|\mathcal{D}_n\bigg)+\nonumber \\
%&\bigg(\frac{1}{2}E_{f_{k_1}}\int\limits_{\Delta_k}(f_n^*-f_{k_1})^2dx|\mathcal{D}_n\bigg) \bigg) \bigg]
%\end{align}
Just as in \cite{Tsybakov_1991} and similar to the proof of Theorem \ref{point_optimality} above, from assumptions B1 and B2, we get:
\begin{align}\label{KB_dist_ineq}
I(P_{k_0},P_{k_1})\leq I_0 \sum_{i=1}^{n}(T_{\mu}g_k(\theta_i,s_i))^2
\end{align}
Now from the definition of ERT and from the fact that $\eta_k(x)$ is supported in $\Delta_k\subset B_1(x)$,
\begin{align}
\vert (T_\mu g_k)(\theta_i,s_i)\vert &=\bigg\vert \int\limits_{L_i\cap \Delta_k} e^{\mu x\cdot \theta_i^{\perp}} A m^{-\beta}\eta(m(x-a_k))dx\bigg\vert\nonumber \\
&\leq {C_9} \int\limits_{L_i\cap \Delta_k} \vert A m^{-\beta}\eta(m(x-a_k))\vert dx \quad \quad (C_9=\sup_{x\in B_1(x)}e^{\mu x\cdot \theta^{\perp}})\nonumber\\
&\leq C_{10} m^{-\beta-1}
\end{align}
Now note that since $k\in J$, thus at most $C_6n/m$ of the terms in the sum on RHS of (\ref{KB_dist_ineq}) are non zero.
Putting it all together, we have :
\begin{align}
I(P_{k_0},P_{k_1})\leq I_0 C_6 (C_{10})^2 (n/m) m^{-2\beta-2}\leq I_0 C_6 C_{10}^2 C_8.
\end{align}
Thus if we choose $C_8\leq \frac{I_0 C_6 C_{10}^2}{2}$, then we get $I(P_{k_0},P_{k_1})\leq 1/2$ as desired. This completes the proof of the theorem.
\begin{remark}
	Note that Theorems \ref{ub_thm1} and \ref{opti_l2} together establish the optimality of the estimator in the $d_2$ semi-norm setting.
\end{remark}

\bibliographystyle{plain}
\bibliography{Refs2stat}

\begin{thebibliography}{10}

\bibitem{kuch96}
Valentina Aguilar, Leon Ehrenpreis, and Peter Kuchment.
\newblock Range conditions for the exponential {R}adon transform.
\newblock {\em J. Anal. Math.}, 68:1--13, 1996.

\bibitem{Bal04}
Guillaume Bal and Philippe Moireau.
\newblock Fast numerical inversion of the attenuated {R}adon transform with
  full and partial measurements.
\newblock {\em Inverse Problems}, 20(4):1137--1164, 2004.

\bibitem{Boman04}
Jan Boman and Jan-Olov Str\"{o}mberg.
\newblock Novikov's inversion formula for the attenuated {R}adon transform---a
  new approach.
\newblock {\em J. Geom. Anal.}, 14(2):185--198, 2004.

\bibitem{Cavalier_98}
L.~Cavalier.
\newblock Asymptotically efficient estimation in a problem related to
  tomography.
\newblock {\em Math. Methods Statist.}, 7(4):445--456 (1999), 1998.

\bibitem{Cavalier_00}
Laurent Cavalier.
\newblock Efficient estimation of a density in a problem of tomography.
\newblock {\em Ann. Statist.}, 28(2):630--647, 2000.

\bibitem{finch_io}
David~V. Finch.
\newblock The attenuated x-ray transform: recent developments.
\newblock In {\em Inside out: inverse problems and applications}, volume~47 of
  {\em Math. Sci. Res. Inst. Publ.}, pages 47--66. Cambridge Univ. Press,
  Cambridge, 2003.

\bibitem{novikov02}
J.-P. Guillement, F.~Jauberteau, L.~Kunyansky, R.~Novikov, and R.~Trebossen.
\newblock On single-photon emission computed tomography imaging based on an
  exact formula for the nonuniform attenuation correction.
\newblock {\em Inverse Problems}, 18(6):L11--L19, 2002.

\bibitem{Hahn_quinto}
Marjorie~G. Hahn and Eric~Todd Quinto.
\newblock Distances between measures from {$1$}-dimensional projections as
  implied by continuity of the inverse {R}adon transform.
\newblock {\em Z. Wahrsch. Verw. Gebiete}, 70(3):361--380, 1985.

\bibitem{hazou_solmon}
Irene~A. Hazou and Donald~C. Solmon.
\newblock Filtered-backprojection and the exponential {R}adon transform.
\newblock {\em J. Math. Anal. Appl.}, 141(1):109--119, 1989.

\bibitem{Monard18}
Sean Holman, Fran\c{c}ois Monard, and Plamen Stefanov.
\newblock The attenuated geodesic x-ray transform.
\newblock {\em Inverse Problems}, 34(6):064003, 26, 2018.

\bibitem{JS_90}
Iain~M. Johnstone and Bernard~W. Silverman.
\newblock Speed of estimation in positron emission tomography and related
  inverse problems.
\newblock {\em Ann. Statist.}, 18(1):251--280, 1990.

\bibitem{Tsybakov_1991}
A.~P. Korostel\"{e}v and A.~B. Tsybakov.
\newblock Optimal rates of convergence of estimators in a probabilistic setup
  of tomography problem.
\newblock {\em Problems of information transmission}, 27:73--81, 1991.

\bibitem{Tsybakov_92}
A.~P. Korostel\"{e}v and A.~B. Tsybakov.
\newblock Asymptotically minimax image reconstruction problems.
\newblock In {\em Topics in nonparametric estimation}, volume~12 of {\em Adv.
  Soviet Math.}, pages 45--86. Amer. Math. Soc., Providence, RI, 1992.

\bibitem{minimax_book}
A.~P. Korostel\"{e}v and A.~B. Tsybakov.
\newblock {\em Minimax theory of image reconstruction}, volume~82 of {\em
  Lecture Notes in Statistics}.
\newblock Springer-Verlag, New York, 1993.

\bibitem{louis_96}
A.~K. Louis.
\newblock Approximate inverse for linear and some nonlinear problems.
\newblock {\em Inverse Problems}, 11(6):1211--1223, 1995.

\bibitem{louis_maas}
A.~K. Louis and P.~Maass.
\newblock A mollifier method for linear operator equations of the first kind.
\newblock {\em Inverse Problems}, 6(3):427--440, 1990.

\bibitem{Louis1982}
A.K. Louis.
\newblock Optimal sampling in nuclear magnetic resonance ({NMR}) tomography.
\newblock {\em Journal of Computer Assisted Tomography}, 6(2):334--340, apr
  1982.

\bibitem{Monard16}
Fran\c{c}ois Monard.
\newblock Inversion of the attenuated geodesic {X}-ray transform over functions
  and vector fields on simple surfaces.
\newblock {\em SIAM J. Math. Anal.}, 48(2):1155--1177, 2016.

\bibitem{Monard_19}
Fran\c{c}ois Monard, Richard Nickl, and Gabriel~P. Paternain.
\newblock Efficient nonparametric {B}ayesian inference for {$X$}-ray
  transforms.
\newblock {\em Ann. Statist.}, 47(2):1113--1147, 2019.

\bibitem{Natt01}
F.~Natterer.
\newblock Inversion of the attenuated {R}adon transform.
\newblock {\em Inverse Problems}, 17(1):113--119, 2001.

\bibitem{Natterer_textbook}
F.~Natterer.
\newblock {\em The Mathematics of Computerized Tomography}.
\newblock Society for Industrial and Applied Mathematics, 2001.

\bibitem{Natt_79}
Frank Natterer.
\newblock On the inversion of the attenuated {R}adon transform.
\newblock {\em Numer. Math.}, 32(4):431--438, 1979.

\bibitem{natt_wubb_book}
Frank Natterer and Frank W\"{u}bbeling.
\newblock {\em Mathematical methods in image reconstruction}.
\newblock SIAM Monographs on Mathematical Modeling and Computation. Society for
  Industrial and Applied Mathematics (SIAM), Philadelphia, PA, 2001.

\bibitem{Novikov2002}
Roman~G. Novikov.
\newblock An inversion formula for the attenuated x-ray transformation.
\newblock {\em Arkiv för Matematik}, 40(1):145--167, apr 2002.

\bibitem{Quinto_80}
E.T. Quinto.
\newblock The dependence of the generalized {R}adon transform on defining
  measures.
\newblock {\em Trans. Amer. Math. Soc.}, 257(2):331--346, 1980.

\bibitem{Quinto_83}
E.T. Quinto.
\newblock The invertibility of rotation invariant {R}adon transforms.
\newblock {\em J. Math. Anal. Appl.}, 91(2):510--522, 1983.

\bibitem{Rigaud2015}
G.~Rigaud and A.~Lakhal.
\newblock Approximate inverse and {S}obolev estimates for the attenuated
  {R}adon transform.
\newblock {\em Inverse Problems}, 31(10):105010, 21, 2015.

\bibitem{rullg04}
Hans Rullg\aa~rd.
\newblock An explicit inversion formula for the exponential {R}adon transform
  using data from {$180^\circ$}.
\newblock {\em Ark. Mat.}, 42(2):353--362, 2004.

\bibitem{salo_11}
Mikko Salo and Gunther Uhlmann.
\newblock The attenuated ray transform on simple surfaces.
\newblock {\em J. Differential Geom.}, 88(1):161--187, 2011.

\bibitem{Schuster_book}
Thomas Schuster.
\newblock {\em The method of approximate inverse: theory and applications},
  volume 1906 of {\em Lecture Notes in Mathematics}.
\newblock Springer, Berlin, 2007.

\bibitem{Shne94}
I.~Ya. Shne\u{\i}berg.
\newblock Exponential {R}adon transform.
\newblock In {\em Applied problems of {R}adon transform}, volume 162 of {\em
  Amer. Math. Soc. Transl. Ser. 2}, pages 235--245. Amer. Math. Soc.,
  Providence, RI, 1994.

\bibitem{Shne2}
I.~Ya. Shne\u{\i}berg, I.~V. Ponomarev, V.~A. Dmitrichenko, and S.~D.
  Kalashnikov.
\newblock On a new reconstruction algorithm in emission tomography.
\newblock In {\em Applied problems of {R}adon transform}, volume 162 of {\em
  Amer. Math. Soc. Transl. Ser. 2}, pages 247--255. Amer. Math. Soc.,
  Providence, RI, 1994.

\bibitem{Siltanen2003}
S~Siltanen, V~Kolehmainen, S~J rvenp, J~P Kaipio, P~Koistinen, M~Lassas,
  J~Pirttil, and E~Somersalo.
\newblock Statistical inversion for medical x-ray tomography with few
  radiographs: I. general theory.
\newblock {\em Physics in Medicine and Biology}, 48(10):1437--1463, may 2003.

\bibitem{TM_80}
Oleh Tretiak and Charles Metz.
\newblock The exponential {R}adon transform.
\newblock {\em SIAM J. Appl. Math.}, 39(2):341--354, 1980.

\bibitem{Tsybakov_book}
Alexandre~B. Tsybakov.
\newblock {\em Introduction to nonparametric estimation}.
\newblock Springer Series in Statistics. Springer, New York, 2009.
\newblock Revised and extended from the 2004 French original, Translated by
  Vladimir Zaiats.

\bibitem{Lassas_09}
Simopekka V\"{a}nsk\"{a}, Matti Lassas, and Samuli Siltanen.
\newblock Statistical {X}-ray tomography using empirical {B}esov priors.
\newblock {\em Int. J. Tomogr. Stat.}, 11(S09):3--32, 2009.

\bibitem{Wen_2006}
Junhai Wen and Zhengrong Liang.
\newblock An inversion formula for the exponential radon transform in spatial
  domain with variable focal-length fan-beam collimation geometry.
\newblock {\em Medical Physics}, 33(3):792--798, feb 2006.

\end{thebibliography}

\end{document}